\newtheorem{theorem}{Theorem}[section]
\newtheorem{corollary}[theorem]{Corollary}
\newtheorem{lemma}[theorem]{Lemma}
{\theoremstyle{definition}\newtheorem{definition}[theorem]{Definition}
\newtheorem{remark}[theorem]{Remark}
\newtheorem{notation}[theorem]{Notation}}
\newcommand{\Lip}{\mbox{\rm Lip}}
\title[Invariance Principle for Intermittent Non-stationary System] 
      {Almost surely invariance principle for non-stationary and random intermittent dynamical systems}
\author[Yaofeng Su]{}
\subjclass{Primary: 60F17, 37E05; Secondary: 37A25.}
 \keywords{Intermittency, neutral fixed point, non-stationary dynamical system, almost surely invariance principle, Pomeau-Manneville maps.}
 \email{yfsu@math.uh.edu}
\begin{document}
\maketitle


\centerline{\scshape Yaofeng Su}
\medskip
{\footnotesize
 \centerline{Department of Mathematics}
 \centerline{ University of Houston}
   \centerline{ Houston, Texas 77204-3008, USA}
} 


\bigskip

 \centerline{(Communicated by Zhiren Wang)}

\begin{abstract}
We establish almost sure invariance principles (ASIP), a strong form of approximation by Brownian motion, for non-stationary time series arising as observations on sequential maps possessing an indifferent fixed point. These transformations are obtained by perturbing the slope in the Pomeau-Manneville map. Quenched ASIP for random compositions of these maps is also obtained.
\end{abstract}

\section{Introduction}

Almost surely invariance principle (ASIP) is a very strong statistical property. It is a matching of the trajectories of the dynamical system with a Brownian motion in such a way that the error is negligible in comparison with the Birkhoff sum. Limit theorems such as the central limit theorem (CLT), the functional central limit theorem and the law of the iterated logarithm (LIL) transfer from the Brownian motion to time-series generated by observations on the dynamical system.

Haydn, Nicol, T\"or\"ok, Vaienti \cite {HNTV} dealt with ASIP for a non-stationary process given by the observation along the orbit obtained by concatenating maps chosen in a given set. They are in one and more dimensions a.e. piecewise expanding, more precisely their transfer operator (Perron-Frobenius operator) with respect to the Lebesgue measure is quasi-compact on a suitable Banach space. This allows
to approximate the original process with reverse martingale differences plus an error. Same approach is applied to random dynamical system with sufficient hyperbolicity in \cite {DFGV}. Based on Skorokhod embedding technique, Cuny and Merlev\`ede  \cite {CM} recently showed that reverse martingale differences satisfy ASIP under some conditions. The error is shown to be essentially bounded due to the presence of a spectral gap in the transfer operator on a Banach space continuously injected in $L^{\infty}$. Therefore ASIP for non-stationary dynamical systems in \cite {HNTV} and quenched ASIP for random dynamical systems in  \cite{DFGV} are satisfied.

However, if the transfer operator with respect to the Lebesgue measure is not quasi-compact on a suitable Banach space, the approach described above fails to work. Such example for non-stationary dynamical systems and related statistical properties are provided in a preceding series of papers  \cite{AHNTV, NTV, OJ}.

The first paper \cite{AHNTV} considered composition of Pomeau-Manneville like maps, obtained
by perturbing the slope at the indifferent fixed point 0. They obtain polynomial decay of correlations for particular classes of centered observables, which could also be interpreted as the decay of the iterates of the transfer operator on functions of zero (Lebesgue) average; this fact is also known as loss of memory.

The last two papers \cite{NTV, OJ} considered the same system and proved self-norming CLT under the assumption that it is sufficiently chaotic and the variance grows at a certain rate. Moreover, they proved self-norming CLT for nearby maps and quenched CLT for random compositions of maps in the same family provided the system is sufficiently chaotic and the base map of random dynamical system has strong mixing.

In this paper, the same system as \cite{NTV} is considered and some of its properties are improved, namely the stronger statistical property (ASIP) is obtained. Our construction for Gaussian variable in ASIP is close to Proposition 2.1 in \cite{CM}, that is, applying Skorohod embedding to tail series, but we won't impose strong conditions like (2.1), (2.2) in \cite{CM}, which loses lots of information of Skorohod embedding. Instead, we will give a sharp condition for ASIP (see our Lemma \ref{criteria1}). Surprisingly, this condition can be verifed by our system considered here. Besides,  due to non-uniformity of our system, the error rate of our ASIP is just slightly less than $\frac{1}{2}$ (not $\frac{1}{4}$ in \cite{CM}). So we will not give an explicit formula for it.

\section{ Definitions and notations}\label{positive}

Consider a family of Pomeau-Manneville maps on $[0,1]$: $  0< \alpha <1 $,
\begin{eqnarray}
T_{\alpha}(x) =
\begin{cases}
x+2^{\alpha}x^{1+\alpha},      & 0\le x \le \frac{1}{2}\\
2x-1,  & \frac{1}{2} < x \le 1 \\
\end{cases}.
\end{eqnarray}

Given $ n, m, k \in \mathbb{N}, 0< \beta_k < \alpha $, denote:
\[T_k := T_{\beta_{k}},\]
\[T^{n+m}_m :=T_{n+m} \circ T_{n+m-1} \circ \dots\circ T_m,\]
\[T^n :=T_1^{n}=T_n \circ T_{n-1} \circ \dots\circ T_1.\]

The transfer operator (Perron-Frobenius operator) $P_k$ associated to $T_k$ is defined by the
duality relation:
{\small\[\int g \cdot P_{k} f dm= \int g \circ T_{k} \cdot f dm \text{ for all } f
  \in L^1, g \in L^{\infty}, \text{ where } dm \text{ is Lebesgue
    measure.}\]}

Similar to $ T_k $, denote:
\[P_k := P_{\beta_{k}},\]
\[P^{n+m}_m :=P_{n+m} \circ P_{n+m-1} \circ \dots\circ P_m,\]
\[P^n :=P_1^{n}=P_n \circ P_{n-1} \circ \dots\circ P_1.\]

As in \cite {AHNTV}, define $X(x) :=  x, x\in [0,1]$, a cone $ C_a \subset{L^1[0,1]}$ by:
\[ C_a :=\{f \in C^1(0,1]: f \ge 0, f \text{ decreasing, } X^{\alpha+1} \cdot f \text{ increasing, } f(x) \le ax^{-\alpha}\int { f} dm\} \]

From \cite {AHNTV}, if $a$ is chosen large enough, $ C_a$ is preserved by every $ P_n $, hence by every
$ P_n^{n+m}$. We fix such a large $a$ from now on. The following decay of correlations holds:

\begin{theorem}[see \cite{AHNTV, NTV}]\label{citethm} \par
  Assume $K, M >0$, $\phi_k \in C^1[0,1]$ and $h_k\in C_a$ with
  $ ||\phi_k||_{C^1}\le K, ||h_k||_{L^1} \le M$ for all $k \ge 1 $.

  Then, for $1\le p< \frac{1}{\alpha}$, there is a constant
  $ C_{K,M, \alpha,p} $ such that for all $m, n \in \mathbb{N}$:
\[ ||P_{m+1}^{n+m}(\phi_k\cdot h_k -\int{\phi_k\cdot h_k}dm)||_{L^p}\le C_{K,M, \alpha,p} \cdot \frac{1}{n^{\frac{1}{p\alpha}-1}} \cdot (\log n)^{\frac{1-p\alpha}{\alpha p-p \alpha^2}}.\]
\end{theorem}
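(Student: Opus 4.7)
The plan is to exploit the invariance of the cone $C_a$ under every $P_k$ (established in \cite{AHNTV}) and reduce the problem to a quantitative loss-of-memory statement for iterates acting on differences of cone elements with equal mass. I would proceed in three steps.

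The first step is the cone decomposition. I would write
\[\phi_k h_k - \int \phi_k h_k\, dm = f_k - g_k\]
with $f_k, g_k$ in an enlarged cone $C_{a'}$, $a' = a'(K,M,\alpha,a)$, and $\int f_k\, dm = \int g_k\, dm$. A natural attempt is $f_k = (A+\phi_k)h_k$ and $g_k = A h_k + \bigl(\int\phi_k h_k\, dm\bigr)\cdot\mathbf{1}$, with $A \gtrsim K$ chosen so that $A+\phi_k \ge 1$. Since $C_a$ is closed under positive linear combinations and contains both $h_k$ and the constant function $\mathbf{1}$ (once $a \ge 1$), the summand $g_k$ is automatically a cone element; for $f_k$ the decreasing property and the increasing property of $X^{\alpha+1}f_k$ are recovered after enlarging $a$ enough to absorb the Lipschitz perturbation coming from $\phi_k$. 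When the direct product $(A+\phi_k)h_k$ still spoils strict monotonicity, one refines by splitting $\phi_k = \phi_k(0) + (\phi_k - \phi_k(0))$ and handling the $O(x)$ correction separately; the sign of $\int \phi_k h_k\, dm$ is handled by swapping $\phi_k$ and $-\phi_k$ if needed.

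The second step is the $L^1$ decay. For $f, g \in C_{a'}$ with equal mass, the quantitative Hilbert-metric contraction on $C_{a'}$, furnished by the indifferent-fixed-point analysis in \cite{AHNTV}, yields
\[\|P_{m+1}^{n+m}(f - g)\|_{L^1} \lesssim n^{-(1/\alpha - 1)}.\]
Although no single $P_k$ admits a spectral gap on standard BV or Lipschitz spaces, the cone $C_{a'}$ does contract uniformly, and the indifferent fixed point is precisely what forces the rate to be polynomial rather than exponential, matching the Young-tower return-time tail. The third step is interpolation to $L^p$: the cone condition $f(x) \le a' x^{-\alpha}\int f\, dm$ yields a uniform $L^q$ bound for every $q < 1/\alpha$, and H\"older-interpolating this with the $L^1$ decay above, optimized by letting $q \uparrow 1/\alpha$ as a suitable function of $n$, produces the rate $n^{-(1/(p\alpha) - 1)}$ together with the $(\log n)^{(1-p\alpha)/(\alpha p - p\alpha^2)}$ factor, which is the standard cost of interpolating at the endpoint of integrability.

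The main obstacle is the first step. Multiplying a cone element by a $C^1$ function can destroy each of the three monotonicity-type conditions defining $C_a$, and one must engineer the decomposition so that $a'$ depends only on $K, M, \alpha$ and the fixed $a$, and not on the particular sequence $\{\beta_k\}$ or on the specific pair $(\phi_k, h_k)$. Once this decomposition is in place with uniform constants, the remainder reduces to a careful bookkeeping of the polynomial cone contraction and of the interpolation.
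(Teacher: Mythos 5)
First, a point of comparison: the paper never proves Theorem \ref{citethm} --- it is quoted from \cite{AHNTV,NTV} (the sequential $L^1$ loss-of-memory estimate is from \cite{AHNTV}, and the $L^p$ statement for $C^1$ observables is from \cite{NTV}), so your proposal has to be measured against the proofs in those papers. Your overall architecture --- reduce $\phi_k h_k-\int\phi_k h_k\,dm$ to a difference of equal-mass elements of an enlarged cone, apply a sequential $L^1$ decay estimate, then upgrade to $L^p$ via the uniform cone bound --- is indeed the architecture of those proofs.

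However, your first step, which you yourself single out as the main obstacle, fails as proposed, and your suggested repairs do not fix it. The function $f_k=(A+\phi_k)h_k$ need not lie in $C_{a'}$ for \emph{any} $a'$: take $h_k=\mathbf{1}$ (a cone element) and $\phi_k$ increasing; then $(A+\phi_k)h_k$ is increasing, so the ``decreasing'' requirement fails, and enlarging $a'$ is irrelevant because the two monotonicity conditions in the definition of $C_{a'}$ do not involve $a'$. Splitting off $\phi_k(0)$ does not help either: the obstruction is the term $\phi_k'h_k$ in the derivative, which is not controlled by $h_k'$ or by $(x^{1+\alpha}h_k)'$, both of which may vanish. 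The device actually used in \cite{AHNTV,NTV} (going back to Liverani--Saussol--Vaienti) is to correct with a multiple of the \emph{singular} cone element $x^{-\alpha}\int h_k\,dm$ rather than with $h_k$ or $\mathbf{1}$: for $A\ge K$ and $B\ge Ka/\alpha$ one checks that $(A+\phi_k)h_k+Bx^{-\alpha}\int h_k\,dm\in C_{a'}$ with $a'$ depending only on $K,a,\alpha$, because the cone inequality $h_k\le ax^{-\alpha}\int h_k\,dm$ lets the derivative $-B\alpha x^{-1-\alpha}\int h_k\,dm$ dominate $|\phi_k'|h_k$, and similarly for the condition that $x^{1+\alpha}f$ be increasing. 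Two further inaccuracies in your later steps: the $L^1$ decay is not produced by ``uniform Hilbert-metric contraction'' of the cone --- uniform projective contraction would force \emph{exponential} loss of memory, which is impossible for these maps; the image of $C_a$ under $P_k$ has infinite Hilbert diameter, and \cite{AHNTV} instead runs a coupling argument based on lower bounds for cone densities away from the neutral fixed point. And the logarithmic factor is not ``the cost of interpolating at the endpoint'': at $p=1$ no interpolation occurs, yet the stated bound carries $(\log n)^{1/\alpha}$. That factor is inherited from the $(\log n)^{1/\alpha}$ already present in the sequential $L^1$ estimate of \cite{AHNTV}, which your second step drops without justification (the clean rate $n^{-(1/\alpha-1)}$ is known only in the stationary case); carrying it through the $L^1$--versus--$x^{-\alpha}$ interpolation reproduces exactly the exponent $\frac{1-p\alpha}{p\alpha(1-\alpha)}$ of the theorem, whereas your scheme of letting $q\uparrow 1/\alpha$ with $n$ produces an additional spurious power of $\log n$.
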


\begin{corollary} \label{decay1}
  Assume $ K, M >0$, $ \phi_k \in \Lip[0,1]=W^{1,\infty}([0,1])$ and
  $h_k\in C_a$ are s.t.
  $ ||h_k||_{L^1} \le M,||\phi_k||_{W^{1,\infty}} :=
  ||\phi_k||_{L^{\infty}}+\sup_{x \not= y\in[0,1]}\frac{|\phi_k(x)-\phi_k(y)|}{|x-y|}\le K$ for all $k \ge 1 $.

  Then, for $ 1\le p< \frac{1}{\alpha}$, there is a constant
  $ C_{K,M, \alpha,p} $ such that for all $m, n \in \mathbb{N}$:
\begin{equation} \label{decay}
||P_{m+1}^{n+m}(\phi_k\cdot h_k -\int{\phi_k\cdot h_k}dm)||_{L^p}\le C_{K,M, \alpha,p} \cdot \frac{1}{n^{\frac{1}{p\alpha}-1}}\cdot (\log n)^{\frac{1-p\alpha}{\alpha p-p \alpha^2}}.
\end{equation}
\end{corollary}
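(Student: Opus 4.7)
The natural plan is to deduce the corollary from Theorem \ref{citethm} by approximating each Lipschitz observable $\phi_k$ by a $C^1$ one and controlling the approximation error using structural properties of the cone $C_a$. Fix a standard mollifier $\rho_\epsilon$ of width $\epsilon>0$, extend $\phi_k$ from $[0,1]$ to $\mathbb{R}$ with the same Lipschitz constant $K$ (e.g.\ via the McShane extension), and set $\phi_k^\epsilon := \phi_k * \rho_\epsilon$ restricted to $[0,1]$. Then $\|\phi_k^\epsilon\|_{L^\infty}\le K$, $\|(\phi_k^\epsilon)'\|_{L^\infty}=\|\phi_k'*\rho_\epsilon\|_{L^\infty}\le K$, so $\|\phi_k^\epsilon\|_{C^1}\le 2K$, while $\|\phi_k-\phi_k^\epsilon\|_{L^\infty}\le K\epsilon$.

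Next, decompose
\[
\phi_k h_k-\int\phi_k h_k\,dm=\Bigl[\phi_k^\epsilon h_k-\int\phi_k^\epsilon h_k\,dm\Bigr]+\Bigl[(\phi_k-\phi_k^\epsilon)h_k-\int(\phi_k-\phi_k^\epsilon)h_k\,dm\Bigr].
\]
For the first bracket, Theorem \ref{citethm} applied with the $C^1$-bound $2K$ yields the target estimate with constant $C_{2K,M,\alpha,p}$. For the second bracket, I use positivity and the cone bound: since $P_{m+1}^{n+m}$ is positive and $|\phi_k-\phi_k^\epsilon|\le K\epsilon$,
\[
\bigl|P_{m+1}^{n+m}\bigl((\phi_k-\phi_k^\epsilon)h_k\bigr)\bigr|\le K\epsilon\cdot P_{m+1}^{n+m}h_k.
\]
Because $C_a$ is preserved by every $P_j$ and the $L^1$-norm is preserved on nonnegative functions, $P_{m+1}^{n+m}h_k\in C_a$ with $\|P_{m+1}^{n+m}h_k\|_{L^1}\le M$, hence $P_{m+1}^{n+m}h_k(x)\le aMx^{-\alpha}$. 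Since $p<1/\alpha$, $x^{-\alpha}\in L^p[0,1]$, giving $\|P_{m+1}^{n+m}((\phi_k-\phi_k^\epsilon)h_k)\|_{L^p}\le C'_{M,\alpha,p}K\epsilon$. The constant contribution $|\int(\phi_k-\phi_k^\epsilon)h_k\,dm|\le K\epsilon M$ is controlled similarly.

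To finish, I choose $\epsilon$ equal to the right-hand side of \eqref{decay}, i.e.\ $\epsilon=n^{-(1/(p\alpha)-1)}(\log n)^{(1-p\alpha)/(\alpha p-p\alpha^2)}$, so that the mollification error exactly matches the decay rate supplied by Theorem \ref{citethm}, with a possibly enlarged constant $C_{K,M,\alpha,p}$.

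I do not anticipate a real obstacle: the only delicate point is that the transfer operator is not an $L^p$-contraction, which is why one cannot simply move $\|\phi_k-\phi_k^\epsilon\|_{L^\infty}$ outside after applying $P_{m+1}^{n+m}$. The remedy — exploiting positivity to reduce the estimate to $P_{m+1}^{n+m}h_k$, and then using the cone bound $h_k(x)\le ax^{-\alpha}M$ which lies in $L^p$ precisely because $p<1/\alpha$ — is exactly aligned with the range of $p$ already present in the hypothesis, so no further restriction is needed.
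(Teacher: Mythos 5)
Your proposal is correct and follows essentially the same route as the paper's own proof: mollify $\phi_k$ to a $C^1$ function, apply Theorem \ref{citethm} to the mollified observable, and control the mollification error through positivity of $P_{m+1}^{n+m}$ together with the cone bound $f(x)\le ax^{-\alpha}\int f\,dm$ on $P_{m+1}^{n+m}h_k$ and $P_{m+1}^{n+m}\mathbf{1}$, which lies in $L^p$ precisely because $p<1/\alpha$. The only cosmetic difference is that the paper lets $\epsilon\to 0$ to absorb the error entirely, whereas you fix $\epsilon$ equal to the decay rate and enlarge the constant; both are valid.
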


For simplicity, in many of the following statements
$\frac{1}{n^{\frac{1}{p\alpha}-1}}$ will be used as the rate of decay, ignoring the $\log n$-factor. This is still correct if taking $\alpha$ a slightly larger value (and is actually the correct rate of decay for the stationary case).

\begin{proof}
Using convolution, for $ \phi_k \in \Lip[0,1]$, there is $\phi_{k,\epsilon} \in C^{\infty}[0,1]$ such that
\[||\phi_{k,\epsilon}-\phi_k||_{L^{\infty}}\le \epsilon||\phi_k||_{W^{1,\infty}}\le K \epsilon,\]
\[||\phi_{k,\epsilon}||_{W^{1,\infty}}\le||\phi_k||_{W^{1,\infty}}\le K.\]

By Theorem \ref{citethm},
\[||P_{m+1}^{n+m}(\phi_{k,\epsilon}\cdot h_k -\int{\phi_{k,\epsilon}\cdot h_k}dm)||_{L^p}\le C_{K,M,\alpha, p}\cdot\frac{1}{n^{\frac{1}{p\alpha}-1}}\cdot (\log n)^{\frac{1-p\alpha}{\alpha p-p \alpha^2}}.\]

Let $\epsilon\to 0$,
\[||P_{m+1}^{n+m}(\phi_{k,\epsilon}\cdot h_k -\int{\phi_{k,\epsilon}\cdot h_k}dm)-P_{m+1}^{n+m}(\phi_k\cdot h_k -\int{\phi_k\cdot h_k}dm)||_{L^p}\]
\[\le \epsilon \cdot ||\phi_k||_{W^{1,\infty}} \cdot (||P_{m+1}^{n+m}{h_k}||_{L^p}+||h_k||_{L^1}\cdot||P_{m+1}^{m+n} \mathbf{1}||_{L^p})\]
\[\le a \cdot \epsilon \cdot K \cdot(1+||h_k||_{L^1})\cdot (\int_{[0,1]}{\frac{1}{x^{\alpha p}}dm})^{\frac{1}{p}}\to 0.\]
The last inequality holds since $P_{m+1}^{n+m}h_k, P_{m+1}^{n+m}\mathbf{1} \in C_a$. Hence (\ref{decay}) holds.
\end{proof}

\begin{definition}[ASIP for a Non-stationary Dynamical System]
  Given a non-stationary dynamical system
  $ ([0,1], \mathcal{B}, (T_k)_{k \ge 1}, dm)$ and an observation
  $ \phi \in \Lip[0,1]$, denote $\phi_k:=\phi-\int{\phi \circ T^k dm}$.

  Then $ (\phi_k \circ T^k)_{k \ge 1} $ satisfies ASIP if there are
  $\epsilon \in (0,1) $ and independent mean zero Gaussian
  variables $ (G_k)_{k \ge 1}$ on some extended probability space of
  $([0,1], \mathcal{B},dm)$ such that almost surely,
\[ \sum_{k\le n} \phi_k \circ T^k - \sum_{k \le n} G_k=o(\Sigma_n^{1-\epsilon}),\]
\[\sum_{k\le n} \mathbb{E}{ G_k^2} =\Sigma_n^2+ O(\Sigma_n^{2(1-\epsilon)}),\]
\[ \text{with }\Sigma_n^2:=\int(\sum_{k\le n} \phi_k \circ T^k )^2 dm \to \infty.\]

\end{definition}

We point out that if $ (\phi_k \circ T^k)_{k \ge 1} $ satisfies ASIP, then
it also satisfies the self-norming CLT and LIL:
\[\frac{\sum_{k\le n} \phi_k \circ T^k}{\Sigma_n} \stackrel{d}{\to} N(0,1),\]
\[\limsup_{n\to \infty}\frac{\sum_{k\le n} \phi_k \circ T^k}{\sqrt{\Sigma_n^2\log\log\Sigma_n^2}}=1,\]
\[\liminf_{n\to \infty}\frac{\sum_{k\le n} \phi_k \circ T^k}{\sqrt{\Sigma_n^2\log\log\Sigma_n^2}}=-1.\]
In fact, there is a matching of the Birkhoff sums
$\sum_{k\le n} \phi_k \circ T^k$ with a standard Brownian motion $B_t $
observed at times of order $\Sigma_n^2$ so that $\sum_{k\le n} \phi_k \circ T^k$
equals $B_{\Sigma_n^2}$ plus a negligible error almost surely.

\begin{notation} $a_n \approx b_n$(resp. $a_n \precsim b_n$) means there is a constant $C \ge 1$ such that $ C^{-1} \cdot b_n \le a_n \le C \cdot b_n$ for all $n$ (resp. $a_n \le C \cdot b_n $ for all $n$).
\end{notation}

\section{The main theorem}

Our main theorem is the following:

\begin{theorem}[ASIP]\ \label{thm}
  Consider the non-stationary dynamical system
  $ ([0,1],\break \mathcal{B}, (T_k)_{k \ge 1}, dm)$, where $dm $ is Lebesgue
  measure, $T_k: =T_{\beta_k}$ are Pomeau-Manneville maps,
  $ 0<\beta_k< \alpha<1 $, and an observation $\phi \in \Lip[0,1]$. If
  $ \Sigma_n^2 := \int (\sum_{k \leq n } \phi_k \circ T^k)^2dm \succsim
  n^{\gamma}$, then
\[ ( \phi_k \circ T^k )_{k \ge 1}\text{ satisfies ASIP} \text{ when } \gamma > \frac{1}{2} + \frac{1+2\alpha}{4(1-2\alpha)}, \alpha < \frac{1}{8}.\]
\end{theorem}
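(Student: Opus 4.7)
I would follow the martingale approximation plus Skorokhod embedding strategy outlined in the introduction, echoing \cite{HNTV, DFGV} but with polynomial rather than spectral-gap inputs. Take the decreasing filtration $\mathcal{F}_k := (T^k)^{-1}\mathcal{B}$ (decreasing because $T^{k+1} = T_{k+1} \circ T^k$). The transfer-operator duality $\int f \circ T^k \cdot g\,dm = \int f \cdot P^k g\,dm$ converts conditional expectations along $(\mathcal{F}_k)$ into $L^p$-norms of iterates of the $P_j$'s, so Corollary \ref{decay1} feeds directly into martingale estimates.

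\textbf{Step 1 (reverse Gordin decomposition).} A standard reverse-Gordin argument produces a decomposition
\[
\sum_{k \le n} \phi_k \circ T^k = \sum_{k \le n} M_k + E_n,
\]
where $(M_k)$ is a reverse martingale-difference sequence for $(\mathcal{F}_k)$ and $E_n$ is a boundary/coboundary term. The corrector is built from tail averages $\mathbb{E}[\phi_j \circ T^j \mid \mathcal{F}_k]$, which by duality are $L^p$-controlled by iterates $P^j_{k+1}$ applied to a Lipschitz function against a density in $C_a$. Corollary \ref{decay1} therefore gives per-term decay of order $(j-k)^{-(1/(p\alpha)-1)}$; since $\alpha < 1/8$ forces $1/(4\alpha) - 1 > 1$, the $L^4$ tail is summable, each $M_k$ is bounded in $L^4$, and $\|E_n\|_{L^2} = O(1) = o(\Sigma_n^{1-\epsilon})$ under the hypothesis on $\Sigma_n$.

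\textbf{Step 2 (variance identification and Skorokhod).} Expanding $\int(\sum_{k \le n} M_k)^2\,dm$ and invoking Corollary \ref{decay1} again to suppress the off-diagonal cross terms yields $\sum_{k \le n} \mathbb{E} M_k^2 = \Sigma_n^2 + O(\Sigma_n^{2(1-\epsilon)})$. I would then apply the author's Lemma \ref{criteria1} to $(M_k)$. That criterion is a Skorokhod-embedding-based ASIP test tailored to reverse martingales with only polynomial control; it takes as input (a) uniform $L^4$ bounds on $M_k$ (verified in Step 1) and (b) concentration of the conditional variances $\mathbb{E}[M_k^2 \mid \mathcal{F}_{k+1}]$ around $\mathbb{E} M_k^2$. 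Both reduce to Corollary \ref{decay1} after approximating $M_k^2$ by a Lipschitz function, since the cone $C_a$ is not preserved under squaring and an extra regularization is needed before the decay estimate bites.

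\textbf{Main obstacle.} The delicate input is (b). Without a spectral gap the fluctuations of $\mathbb{E}[M_k^2 \mid \mathcal{F}_{k+1}]$ around $\mathbb{E} M_k^2$ decay only polynomially, at rate roughly $k^{-(1/(p\alpha)-1)}$ for $p$ close to $1/\alpha$, and the regularization of $M_k^2$ eats further into the moment budget. Summing these errors across $k \le n$ and asking the total to be $o(\Sigma_n^{1-\epsilon})$ forces exactly $\gamma > \frac{1}{2} + \frac{1+2\alpha}{4(1-2\alpha)}$ when $\alpha < \frac{1}{8}$: the $\frac{1}{2}$ is the natural martingale scale and $\frac{1+2\alpha}{4(1-2\alpha)}$ is the accumulated Skorokhod error from balancing the $L^4$ and conditional-variance inputs against the rate $1/(p\alpha)-1$. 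This is also why the ASIP exponent $\epsilon$ comes out slightly less than $\frac{1}{2}$ rather than the $\frac{1}{4}$ available in \cite{CM}, and is left implicit.
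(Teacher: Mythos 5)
Your high-level architecture matches the paper's (reverse martingale decomposition, uniform $L^r$ bounds, Skorokhod embedding of a tail series, then an ASIP criterion), and your Step 1 is essentially the paper's: the corrector there is $H_{n+1}\circ T^{n+1}$, given explicitly by transfer operators, with $\sup_n\|H_n\circ T^n\|_{L^r}<\infty$ for $r<\frac{1}{2\alpha}$. (One small correction in your Step 2: no decay estimate is needed to identify the variance, since the martingale differences are orthogonal; the paper simply writes $\Sigma_n^2=\sum_{k\le n}\int\psi_k^2\circ T^k\,dm+\|H_{n+1}\circ T^{n+1}\|_{L^2}^2$ and uses the uniform $L^2$ bound on $H_{n+1}\circ T^{n+1}$.) The genuine gap is at the point where the paper does its real work. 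Lemma \ref{criteria1} is not a test whose inputs are ``uniform $L^4$ bounds'' and ``concentration of conditional variances'': its hypothesis is that the Skorokhod stopping times for the tail series $R_n=\sum_{k\ge n}\psi_k\circ T^k/\sigma_k^2$ satisfy $\tau_n-\delta_n^2=o(\delta_n^{2+\epsilon_0})$ a.s. Verifying this occupies the paper's entire Step 4: one splits $\tau_n-\delta_n^2=R_n'+R_n''+S_n$, handles the two reverse-martingale parts $R_n'$, $R_n''$ by maximal inequalities and the $L^4$ bounds (needing $\alpha<\frac18$, $\gamma>\frac23$), and is left with $S_n$, i.e.\ the almost sure fluctuations of $\sum_{i\le n}(\psi_i^2\circ T^i-\int\psi_i^2\circ T^i\,dm)$. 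Your proposal to control this by ``approximating $M_k^2$ by a Lipschitz function'' and feeding it to Corollary \ref{decay1} does not work as stated: $\psi_k=\phi_k+H_k-H_{k+1}\circ T_{k+1}$ contains the corrector $H_k$, a ratio of infinite sums of transfer-operator images which is not Lipschitz (it inherits the $x^{-\alpha}$ singularity of the cone $C_a$), and no quantitative Lipschitz approximation with errors small enough to survive an a.s.\ argument is offered or obvious. The paper instead expands $\psi_i^2$ algebraically into the five terms (\ref{1})--(\ref{5}) and treats each by a different device: Sprindzuk's theorem \cite{S} for $\sum_{i\le n}(\phi_i^2\circ T^i-\int\phi_i^2\circ T^i\,dm)$ (the only place Lipschitzness of $\phi$ enters this step), Borel--Cantelli together with (\ref{error rate}) for the $H_{n+1}^2$ terms, a reverse-martingale blocking argument for $\sum_{i\le n}\psi_i\circ T^i\cdot H_{i+1}\circ T^{i+1}$, and an $L^2$-increment bound imported from \cite{NTV} for the cross term $\sum_{i\le n}\phi_i\circ T^i\cdot H_i\circ T^i$.

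This last cross term is also where the threshold comes from, so your ``Main obstacle'' paragraph misattributes the constant. The exponent $\gamma>\frac12+\frac{1+2\alpha}{4(1-2\alpha)}$ is not an accumulated Skorokhod error from balancing $L^4$ moments against the rate $\frac{1}{p\alpha}-1$; it arises from the estimate $\int|U_n-U_m|^2\,dm\precsim n-m+\sum_{m\le j\le n}j^{\frac{2\alpha}{1-2\alpha}}$ for $U_n=(\ref{5})$, combined with a blocking argument along $\lfloor N^{\omega}\rfloor$: Borel--Cantelli across blocks needs $\omega\cdot(2\gamma-\frac{1}{1-2\alpha})>1$, control within blocks needs $\gamma>1-\frac{1}{2\omega}+\frac{\alpha}{1-2\alpha}$, and eliminating $\omega$ from these two inequalities yields exactly $\gamma>\frac12+\frac{1+2\alpha}{4(1-2\alpha)}$. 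Since your proposal replaces the analysis of $S_n$ with an unsupported regularization claim and locates the source of the hypothesis on $\gamma$ in the wrong term, the proof does not close as written.
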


The main steps of the proof are the following:

\vspace*{4pt}\noindent\textbf{First step.} Decompose $\sum_{k \leq n } \phi_k \circ T^k$ as reverse
martingale differences plus error term.

\vspace*{4pt}\noindent\textbf{Second step.} Prove the error term has uniform $ L^p$-bound.

\vspace*{4pt}\noindent\textbf{Third step.} Apply Skorokhod embedding to reverse martingale differences,
obtain a sequence of suitable Gaussian variables from Brownian motion.

\vspace*{4pt}\noindent\textbf{Fourth step.} Prove that when $ \gamma > \frac{1}{2} +
\frac{1+2\alpha}{4(1-2\alpha)}, \alpha < \frac{1}{8}$, then the ASIP is satisfied.

First, we will cite/improve some lemmas below:

\subsection*{Step 1: Decomposition}

\begin{lemma}[\cite{NTV}]  For all $n \ge 1$, define
  $ H_{n+1} \circ T^{n+1} := \mathbb{E}[\sum_{k\le n} \phi_k \circ
  T^k|T^{-(n+1)} \mathcal{B}]$. Then there are reverse martingale differences
  $(\psi_k \circ T^k)_{k \ge 1}$ w.r.t. decreasing filtration
  $(T^{-k} \mathcal{B})_{k \ge 1}$ such that:
\begin{equation}\label{martingale}
\sum_{k\le n} \phi_k \circ T^k=\sum_{k \leq n} \psi_k \circ T^k+H_{n+1} \circ T^{n+1},
\end{equation}
\begin{equation}\label{variance}
\sigma_n^2 = \int (\sum_{k \leq n} \psi_k \circ T^k )^2 dm + \int H_{n+1}^2\circ T^{n+1}dm,
\end{equation}
\begin{equation}\label{error}
H_{n+1} \circ T^{n+1}=\frac{\sum_{k\le n}P_{k+1}^{n+1}(\phi_k \cdot P^k\mathbf{1})}{P^{n+1}\mathbf{1}}\circ T^{n+1}.
\end{equation}
\end{lemma}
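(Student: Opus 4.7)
The plan is a Gordin-style telescoping: introduce the right candidate for $\psi_k$ and read off the three claims directly from the definition of $H_{n+1}$.

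First, I would set $M_n := \sum_{k \le n} \phi_k \circ T^k - H_{n+1}\circ T^{n+1}$ with $M_0 := 0$, and define $\psi_k \circ T^k := M_k - M_{k-1}$. Telescoping gives (\ref{martingale}) immediately, and unfolding shows $\psi_k = \phi_k + H_k - H_{k+1}\circ T_{k+1}$, so $\psi_k$ is indeed a genuine function on $[0,1]$ composed with $T^k$. The reverse martingale property then reduces to $\mathbb{E}[\psi_k \circ T^k \mid T^{-(k+1)}\mathcal{B}] = 0$, which splits as $\mathbb{E}[M_k \mid T^{-(k+1)}\mathcal{B}] - \mathbb{E}[M_{k-1} \mid T^{-(k+1)}\mathcal{B}]$. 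The first term vanishes by the very definition of $H_{k+1}$, and the second vanishes because $\mathbb{E}[M_{k-1} \mid T^{-k}\mathcal{B}] = 0$ (again by definition, with $n = k-1$) and $T^{-(k+1)}\mathcal{B}\subset T^{-k}\mathcal{B}$, so the tower property propagates the identity to the coarser $\sigma$-algebra.

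For (\ref{variance}) I would expand the square of $\sum_{k\le n} \phi_k \circ T^k = M_n + H_{n+1}\circ T^{n+1}$ in $L^2$; the cross term $\int M_n\cdot H_{n+1}\circ T^{n+1}\,dm$ vanishes because $H_{n+1}\circ T^{n+1}$ is $T^{-(n+1)}\mathcal{B}$-measurable while $\mathbb{E}[M_n \mid T^{-(n+1)}\mathcal{B}] = 0$, so only the two diagonal pieces survive.

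For the explicit formula (\ref{error}) I would invoke the general identity $\mathbb{E}[f \mid (T^{n+1})^{-1}\mathcal{B}] = (P^{n+1} f / P^{n+1}\mathbf{1}) \circ T^{n+1}$, which is a direct consequence of the transfer-operator duality $\int g\circ T^{n+1}\cdot f\,dm = \int g \cdot P^{n+1} f\,dm$. Applying this term by term to $\sum_{k \le n}\phi_k \circ T^k$ and using $P^k(\phi_k \circ T^k) = \phi_k\cdot P^k\mathbf{1}$ (again from duality) to peel off the first $k$ iterations yields exactly $P_{k+1}^{n+1}(\phi_k\cdot P^k\mathbf{1})$ in the numerator. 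No step here is genuinely difficult; the only care needed is to track the decreasing filtration $(T^{-k}\mathcal{B})_{k \ge 1}$ and to remember that Lebesgue measure is not invariant, so the factor $P^{n+1}\mathbf{1}$ in the denominator cannot be suppressed.
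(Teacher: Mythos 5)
Your proof is correct, and it is essentially the standard Gordin/Conze--Raugi-type martingale decomposition that the cited reference \cite{NTV} uses; the paper itself gives no proof of this lemma, only the citation. Your construction (defining $\psi_k\circ T^k$ by differencing the centered sums, checking the reverse martingale property via the tower property on the decreasing filtration $T^{-(k+1)}\mathcal{B}\subset T^{-k}\mathcal{B}$, killing the cross term for (\ref{variance}) by conditioning, and deriving (\ref{error}) from $\mathbb{E}[f\mid (T^{n+1})^{-1}\mathcal{B}]=(P^{n+1}f/P^{n+1}\mathbf{1})\circ T^{n+1}$ together with $P^k(\phi_k\circ T^k)=\phi_k\cdot P^k\mathbf{1}$) matches that argument step for step, the only implicit ingredient being $P^{n+1}\mathbf{1}>0$, which is supplied by Lemma 3.3 of the paper.
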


\subsection*{Step 2: Uniform bound}

\begin{lemma}[lower bound, see \cite{AHNTV}]\label{lower bound}
\[\inf_{n,x} P^n\mathbf{1}(x) > 0.\]
\end{lemma}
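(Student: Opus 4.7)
The plan is to use cone invariance to reduce the problem to a pointwise lower bound at $x=1$. Since the constant function $\mathbf{1}$ lies in $C_a$ (for $a \ge 1$, which is automatic once $a$ is chosen large enough for cone preservation) and $C_a$ is preserved by every $P_k$, we have $P^n\mathbf{1} \in C_a$ for every $n$ and every admissible sequence $(\beta_k)$. Every element of $C_a$ is decreasing on $(0,1]$, hence $\inf_{x} P^n\mathbf{1}(x) = P^n\mathbf{1}(1)$, and it suffices to establish a lower bound for $P^n\mathbf{1}(1)$ uniform in $n$.

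The key step is a universal inequality $f(1) \ge c(\alpha,a)\int f\, dm$ valid for every $f \in C_a$. To prove it, I would combine the two pointwise majorants built into the cone: the monotonicity of $X^{\alpha+1} f$ yields $f(x) \le f(1)\, x^{-(\alpha+1)}$, while the cone condition itself gives $f(x) \le a\, x^{-\alpha} \int f\, dm$. Writing $I = \int f\, dm$ and $L = f(1)$, the two bounds cross at $x_0 = L/(aI)$; one may assume $x_0 \le 1$, else $L \ge aI$ and we are done. Integrating the sharper of the two estimates on $[0,x_0]$ and $[x_0,1]$ respectively gives
\[
I \;\le\; \frac{a\, I\, x_0^{1-\alpha}}{1-\alpha} \;+\; \frac{L\,(x_0^{-\alpha}-1)}{\alpha} \;\le\; \frac{a^\alpha}{\alpha(1-\alpha)}\, I^{\alpha}\, L^{1-\alpha},
\]
which rearranges to $L \ge \bigl(\alpha(1-\alpha)/a^\alpha\bigr)^{1/(1-\alpha)}\, I$, the desired linear inequality.

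Applying this with $f = P^n\mathbf{1}$ and using $\int P^n\mathbf{1}\, dm = \int \mathbf{1}\, dm = 1$ (from the duality defining the transfer operators) yields $P^n\mathbf{1}(1) \ge c(\alpha,a) > 0$ uniformly in $n$ and in the sequence of maps, and combined with monotonicity this proves the lemma. There is no serious obstacle: the only delicate point is aligning the exponents $\alpha$ and $1-\alpha$ in the crossover computation so that the resulting inequality can actually be solved for $L$, which succeeds precisely because the two cone bounds meet at a geometric mean of $I$ and $L$ with complementary exponents summing to one.
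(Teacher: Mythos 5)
Your proof is correct. For comparison: the paper itself gives no argument for this lemma at all --- it is imported as a bare citation to \cite{AHNTV} --- so what you have written is a self-contained reconstruction of the borrowed fact, built from exactly the ingredients that make the citation work: $\mathbf{1}\in C_a$ (which needs $a\ge 1$, harmless since the cones are nested in $a$ and preservation holds for all large $a$), invariance of $C_a$ under arbitrary compositions $P^n$, conservation of mass $\int P^n\mathbf{1}\,dm=\int\mathbf{1}\,dm=1$ from the duality defining $P_k$, and a pointwise lower bound valid on the whole normalized cone. Where you differ from the usual presentation of that last step is in how the bound is extracted: the standard route fixes a small $\epsilon$ with $\frac{a\,\epsilon^{1-\alpha}}{1-\alpha}\le\frac12$, deduces $f(\epsilon)\ge\frac12\int f\,dm$ from the fact that $f$ is decreasing, and then transports the bound to $[\epsilon,1]$ via the monotonicity of $X^{\alpha+1}f$; you instead integrate the sharper of the two cone majorants on either side of their crossover $x_0=L/(aI)$, arriving at the multiplicative inequality $I\le\frac{a^{\alpha}}{\alpha(1-\alpha)}\,I^{\alpha}L^{1-\alpha}$ and hence the explicit constant $c(\alpha,a)=\bigl(\alpha(1-\alpha)a^{-\alpha}\bigr)^{1/(1-\alpha)}$. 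Your computation is sound (each of the two integrals evaluates to $a^{\alpha}I^{\alpha}L^{1-\alpha}$ times $\frac{1}{1-\alpha}$, respectively at most $\frac{1}{\alpha}$; the degenerate case $x_0>1$ gives $L\ge aI\ge c\,I$ outright), and uniformity in $n$ and in the sequence $(\beta_k)$ is automatic because the constant depends only on $(\alpha,a)$. The net effect is the same bound the paper borrows, but with an explicit constant and without reference to an external source, which makes the paper self-contained on this point.
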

\begin{lemma}[upper bound]
\begin{equation}\label{bound1}
\sup_{n}||H_n||_{L^r} < \infty, \text{ if } 1\le r < \frac{1}{2\alpha},
\end{equation}
\begin{equation}\label{bound2}
\sup_{n}||H_n \circ T^n||_{L^r} < \infty, \text{ if } 1\le r < \frac{1}{2\alpha},
\end{equation}
\begin{equation}\label{bound3}
\sup_n||\psi_n \circ T^n||_{L^r} < \infty, \text{ if }  1\le r < \frac{1}{2\alpha}.
\end{equation}
\end{lemma}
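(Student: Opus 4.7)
The plan is to prove the three bounds in sequence, each reducing to the previous. The common starting point is to write $H_n = N_n/P^n\mathbf{1}$ with $N_n := \sum_{k\le n-1} P_{k+1}^n(\phi_k \cdot P^k\mathbf{1})$ from (\ref{error}), and to use Lemma \ref{lower bound} ($P^n\mathbf{1}\ge c>0$) to reduce the control of $H_n$ to that of $N_n$.

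For (\ref{bound1}), I would apply Corollary \ref{decay1} to the pair $\phi_k\in\Lip[0,1]$ (uniformly bounded in $W^{1,\infty}$ since $\phi_k$ differs from $\phi$ by a constant) and $h_k = P^k\mathbf{1}\in C_a$ (with $\|P^k\mathbf{1}\|_{L^1}=1$). The essential point is that the centering in (\ref{decay}) is free: by the very definition $\phi_k = \phi - \int\phi\circ T^k\, dm$,
\[\int \phi_k\cdot P^k\mathbf{1}\, dm = \int \phi_k\circ T^k\, dm = 0.\]
Corollary \ref{decay1} thus gives $\|P_{k+1}^n(\phi_k\cdot P^k\mathbf{1})\|_{L^r}\precsim (n-k)^{-(\frac{1}{r\alpha}-1)}$ for $1\le r<\frac{1}{\alpha}$, and the triangle inequality yields $\|N_n\|_{L^r}\precsim\sum_{j=1}^n j^{-(\frac{1}{r\alpha}-1)}$. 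This is uniformly bounded in $n$ precisely when $\frac{1}{r\alpha}-1>1$, i.e.\ $r<\frac{1}{2\alpha}$, the threshold in the statement. Combined with $|H_n|\le |N_n|/c$ this gives (\ref{bound1}).

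For (\ref{bound2}), since $T^n$ does not preserve Lebesgue, the change of variables produces a weighted integral
\[\|H_n\circ T^n\|_{L^r}^r = \int |H_n|^r P^n\mathbf{1}\, dm = \int \frac{|N_n|^r}{(P^n\mathbf{1})^{r-1}}\, dm,\]
in which the $P^n\mathbf{1}$ weight cancels the denominator of $H_n$ down to a power. For $r\ge 1$, Lemma \ref{lower bound} gives $(P^n\mathbf{1})^{r-1}\ge c^{r-1}$, so $\|H_n\circ T^n\|_{L^r}\precsim \|N_n\|_{L^r}$, uniformly bounded by the previous step. For (\ref{bound3}) I would then telescope (\ref{martingale}) at consecutive indices to get
\[\psi_n\circ T^n = \phi_n\circ T^n + H_n\circ T^n - H_{n+1}\circ T^{n+1};\]
since $\|\phi_n\|_{L^\infty}\le 2\|\phi\|_{L^\infty}$, the triangle inequality together with (\ref{bound2}) finishes the argument.

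The main subtlety is in the first step: the threshold $r<\frac{1}{2\alpha}$ is sharp for this approach, being exactly the boundary of summability of the $L^r$ decay-of-correlations tail from Corollary \ref{decay1}. The rest is essentially bookkeeping, with the near-singularity $P^n\mathbf{1}(x)\le a x^{-\alpha}$ at $0$ handled cleanly by the exact cancellation with the denominator of $H_n$ in the displayed identity for (\ref{bound2}).
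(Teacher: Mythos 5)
Your proposal is correct and uses essentially the same ingredients as the paper's proof: the formula (\ref{error}) for $H_n$, the vanishing of the centering term $\int \phi_k \cdot P^k\mathbf{1}\, dm = \int \phi_k\circ T^k\, dm = 0$ so that Corollary \ref{decay1} applies directly, the triangle inequality giving summability exactly when $r<\frac{1}{2\alpha}$, the lower bound of Lemma \ref{lower bound} to handle the denominator $P^n\mathbf{1}$, and the telescoping of (\ref{martingale}) for (\ref{bound3}). The only (cosmetic) difference is the order: you prove (\ref{bound1}) directly and then (\ref{bound2}) via the change-of-variables identity, whereas the paper establishes (\ref{bound2}) first and deduces (\ref{bound1}) from it using the same lower bound.
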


\begin{proof} (See also the Note Added in Proof in \cite{NTV})

By (\ref{decay}), (\ref{error}), Lemma \ref{lower bound}, and $ P^{n+1}\mathbf{1} \in C_a$, let $ 1 \le r< \frac{1}{2 \alpha}$:
\[\int{ |H_{n+1} \circ T^{n+1}|^r}dm=\int{|H_{n+1}|^r \cdot P^{n+1}\mathbf{1}}dm\]
\[=\int{|\sum_{i\le n}P_{i+1}^{n+1}(\phi_i\cdot P^i\mathbf{1})|^r \cdot |P^{n+1}\mathbf{1}|^{1-r}}dm\]
\[\le (\int{|\sum_{i\le n}P_{i+1}^{n+1}(\phi_i\cdot P^i\mathbf{1})|^{r}}dm)\cdot ||(P^{n+1}\textbf{1})^{1-r}||_{L^{\infty}}\precsim (\sum_{i \le n} \frac{1}{i^{\frac{1}{r\alpha}-1}})^r < \infty .\]

So (\ref{bound2}) \text{holds when} $1 \le r< \frac{1}{2\alpha}.$

To prove (\ref{bound1}), let $1\le r< \frac{1}{2\alpha}$, by Lemma \ref{lower bound}, we have:
\[\sup_n \int{|H_{n+1}|^r} dm \precsim \sup_n\int{|H_{n+1}|^r  \cdot P^{n+1} \mathbf{1}dm}=\sup_n\int{ |H_{n+1} \circ T^{n+1}|^r} dm < \infty.\]

By (\ref{martingale}), (\ref{bound2}) and $ \sup_n||\phi_n||_{L^\infty}<\infty$: $\sup_n||\psi_n \circ T^n||_{L^r} < \infty, \text{ when }  1\le r < \frac{1}{2\alpha}.$
\end{proof}

\begin{lemma}\label{close variance}
\[\sum_{k\le n}\int \psi_k^2 \circ T^k dm =\Sigma_n^2 +O(1), \text{ when } \alpha < \frac{1}{4}.\]
\end{lemma}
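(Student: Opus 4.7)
The plan is to combine the martingale decomposition in (\ref{martingale})--(\ref{variance}) with orthogonality of reverse martingale differences, then control the error term using the uniform $L^2$-bound already established in (\ref{bound2}). The threshold $\alpha<\tfrac14$ is exactly what is needed to take $r=2$ in that lemma.

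First, I would expand the square on the right-hand side of (\ref{martingale}). Since $(\psi_k\circ T^k)_{k\ge 1}$ are reverse martingale differences with respect to the decreasing filtration $(T^{-k}\mathcal{B})_{k\ge 1}$, and since $H_{n+1}\circ T^{n+1}$ is $T^{-(n+1)}\mathcal{B}$-measurable while $T^{-(n+1)}\mathcal{B}\subset T^{-(k+1)}\mathcal{B}$ for every $k\le n$, we have for $k\le n$
\[
\int \psi_k\circ T^k\cdot H_{n+1}\circ T^{n+1}\,dm=\int H_{n+1}\circ T^{n+1}\cdot\mathbb{E}[\psi_k\circ T^k\mid T^{-(k+1)}\mathcal{B}]\,dm=0.
\]
Similarly, for $j<k\le n$, $\psi_j\circ T^j$ is $T^{-j}\mathcal{B}$-measurable and hence $T^{-(k)}\mathcal{B}$-measurable (since $T^{-k}\mathcal{B}\subset T^{-j}\mathcal{B}$ is the wrong direction; the correct measurability is through $T^{-j}\mathcal{B}\supset T^{-(k+1)}\mathcal{B}$, making $\psi_k\circ T^k$ the one whose conditional expectation is taken). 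Condition on $T^{-(k+1)}\mathcal{B}$ to see that the cross term vanishes, yielding orthogonality $\int\psi_j\circ T^j\cdot\psi_k\circ T^k\,dm=0$ for $j\ne k$.

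Putting these together,
\[
\Sigma_n^2=\int\Bigl(\sum_{k\le n}\psi_k\circ T^k+H_{n+1}\circ T^{n+1}\Bigr)^2 dm=\sum_{k\le n}\int\psi_k^2\circ T^k\,dm+\int H_{n+1}^2\circ T^{n+1}\,dm,
\]
which is just (\ref{variance}) unpacked via martingale orthogonality. The lemma then follows the moment I can bound the last term uniformly in $n$. For this I invoke (\ref{bound2}) with $r=2$, and the condition $1\le r<\tfrac{1}{2\alpha}$ becomes precisely $\alpha<\tfrac14$, giving $\sup_n\int H_{n+1}^2\circ T^{n+1}\,dm<\infty$. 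Rearranging,
\[
\sum_{k\le n}\int\psi_k^2\circ T^k\,dm=\Sigma_n^2-\int H_{n+1}^2\circ T^{n+1}\,dm=\Sigma_n^2+O(1).
\]

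There is no real obstacle here beyond bookkeeping: the martingale orthogonality argument is standard, and the only quantitative input is the $L^2$-bound on the error $H_{n+1}\circ T^{n+1}$, which has already been proven. The role of $\alpha<\tfrac14$ is simply that it is the widest range of $\alpha$ for which the previously established upper bound gives second moments, and this threshold will reappear (with the stricter $\alpha<\tfrac18$) in the later steps when higher moments of the martingale differences are needed.
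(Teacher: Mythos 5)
Your proposal is correct in substance and follows essentially the same route as the paper: the paper's proof simply quotes (\ref{variance}) --- which is exactly your expansion of the square of (\ref{martingale}) with vanishing cross terms, taken there from \cite{NTV} --- uses orthogonality of the reverse martingale differences implicitly to write $\int(\sum_{k\le n}\psi_k\circ T^k)^2\,dm=\sum_{k\le n}\int\psi_k^2\circ T^k\,dm$, and then applies (\ref{bound2}) with $r=2$ (hence the threshold $\alpha<\frac{1}{4}$) to absorb $\int H_{n+1}^2\circ T^{n+1}\,dm$ into $O(1)$. The only structural difference is that you re-derive (\ref{variance}) instead of citing it, which is fine.

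One slip to fix: in your orthogonality argument for the cross terms with $j<k$, you condition on $T^{-(k+1)}\mathcal{B}$ and annihilate $\mathbb{E}[\psi_k\circ T^k\mid T^{-(k+1)}\mathcal{B}]$. But to do this you must pull $\psi_j\circ T^j$ out of that conditional expectation, and $\psi_j\circ T^j$ is \emph{not} $T^{-(k+1)}\mathcal{B}$-measurable: the filtration is decreasing, so $T^{-(k+1)}\mathcal{B}\subsetneq T^{-j}\mathcal{B}$ and measurability goes the wrong way --- precisely the confusion your parenthetical remark wrestles with. The correct conditioning (mirroring your correct treatment of the cross terms with $H_{n+1}\circ T^{n+1}$) is on $T^{-(j+1)}\mathcal{B}$: since $k\ge j+1$, the factor $\psi_k\circ T^k$ is $T^{-k}\mathcal{B}$-measurable with $T^{-k}\mathcal{B}\subseteq T^{-(j+1)}\mathcal{B}$, so it pulls out, and the term dies because $\mathbb{E}[\psi_j\circ T^j\mid T^{-(j+1)}\mathcal{B}]=0$. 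In words: the factor with the \emph{larger} index is the one you keep measurable; the martingale-difference property is applied to the factor with the \emph{smaller} index. With this one-line correction your proof is complete and coincides with the paper's.
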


\begin{proof}
Take $r=2$ in (\ref{bound2}), i.e. $ \alpha < \frac{1}{4}$, (\ref{variance}) becomes:
\[\Sigma_n^2 = \int (\sum_{k \leq n} \psi_k \circ T^k )^2 dm + ||H_{n+1}\circ T^{n+1}||_{L^2}^2= \sum_{k \leq n} \int \psi_k^2 \circ T^k dm +O(1).\]
\end{proof}

\section{Proof of Theorem \ref{thm} }

\subsection*{Step 3: ASIP criteria}

The Skorokhod embedding will be used to match
$ \sum_{k \le n} \psi_k\break \circ T^k$ with a Brownian motion. For convenience, we define the following notation from now on:
\[\sigma_n^2:= \sum_{k \leq n} \int \psi_k^2 \circ T^k dm.\]

By Lemma \ref{close variance}, when $\alpha< \frac{1}{4}$,
\[\sigma_n^2 \succsim n^{\gamma} \iff \Sigma_n^2 \succsim n^{\gamma}. \]\newpage

Therefore, from now on, we assume
\[\sigma_n^2 \succsim n^{\gamma}.\]
\begin{lemma}\label{neighbound}
When $\alpha< \frac{1}{4}$, let $R_n:=\sum_{k \geq n} \frac{\psi_k\circ T^k}{\sigma_k^{2}}$, then
\[\frac{\sigma_{n+1}^2}{\sigma_{n}^2} \to 1, \frac{\int{R_n^2}dm}{\sigma_n^{-2}} \to 1.\] Furthermore, $ (R_n)_{n\ge 1}$ is reverse martingale w.r.t.
$ (T^{-n}\mathcal{B})_{n \ge 1}.$
\end{lemma}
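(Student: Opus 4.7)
My plan is to verify the three assertions in turn. The essential ingredients are the orthogonality of the reverse-martingale differences $(\psi_k\circ T^k)_{k\ge 1}$ in $L^2$, the uniform bound $\sup_k\|\psi_k\circ T^k\|_{L^2}<\infty$ supplied by (\ref{bound3}) (available because $\alpha<\frac{1}{4}$), and the divergence $\sigma_n^2\to\infty$ coming from the standing assumption $\sigma_n^2\succsim n^{\gamma}$.

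The first assertion is immediate: by definition $\sigma_{n+1}^2-\sigma_n^2=\|\psi_{n+1}\circ T^{n+1}\|_{L^2}^2$ is uniformly bounded in $n$, and dividing by $\sigma_n^2\to\infty$ yields $\sigma_{n+1}^2/\sigma_n^2\to 1$. For the reverse-martingale property, each $\psi_k\circ T^k$ with $k\ge n$ is $T^{-n}\mathcal{B}$-measurable, so once $R_n$ is known to converge in $L^2$ it is $T^{-n}\mathcal{B}$-measurable. Writing $R_n=R_{n+1}+\psi_n\circ T^n/\sigma_n^2$ and invoking $\mathbb{E}[\psi_n\circ T^n\mid T^{-(n+1)}\mathcal{B}]=0$ (the reverse-martingale-difference property) then gives $\mathbb{E}[R_n\mid T^{-(n+1)}\mathcal{B}]=R_{n+1}$.

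The computation of $\int R_n^2\,dm$ is the heart of the proof. Orthogonality of the reverse-martingale differences yields
\[\int R_n^2\,dm=\sum_{k\ge n}\frac{\|\psi_k\circ T^k\|_{L^2}^2}{\sigma_k^4}=\sum_{k\ge n}\frac{\sigma_k^2-\sigma_{k-1}^2}{\sigma_k^4}.\]
Comparing with the integral of $u\mapsto 1/u^2$ gives the crude bound $\sum_{k\ge n}(\sigma_k^2-\sigma_{k-1}^2)/\sigma_k^4\le \sigma_{n-1}^{-2}$, so in particular $R_n$ converges in $L^2$ and is well defined. For the sharp asymptotic, the first claim gives $\sigma_k^4=(1+o(1))\,\sigma_k^2\,\sigma_{k-1}^2$ as $k\to\infty$, so the general term equals $(1+o(1))\bigl(\sigma_{k-1}^{-2}-\sigma_k^{-2}\bigr)$. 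Sandwiching by $(1\pm\epsilon)\bigl(\sigma_{k-1}^{-2}-\sigma_k^{-2}\bigr)$ for $k$ past some threshold and telescoping yields $\int R_n^2\,dm=(1+o(1))\sigma_{n-1}^{-2}$; combining with $\sigma_{n-1}^2/\sigma_n^2\to 1$ concludes $\int R_n^2\,dm\sim\sigma_n^{-2}$. The only mildly delicate step is this sandwich-and-telescope argument, which is routine once the first claim is in hand.
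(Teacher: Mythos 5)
Your proof is correct and follows essentially the same route as the paper: the uniform $L^2$ bound (\ref{bound3}) plus $\sigma_n^2\to\infty$ for the first claim, orthogonality of the reverse-martingale differences to write $\int R_n^2\,dm=\sum_{k\ge n}(\sigma_k^2-\sigma_{k-1}^2)/\sigma_k^4$, and the ratio $\sigma_{k-1}^2/\sigma_k^2\to 1$ to get the sharp asymptotic. The only cosmetic difference is that you telescope $\sigma_{k-1}^{-2}-\sigma_k^{-2}$ directly where the paper compares the sum with $\int_{\sigma_{n-1}^2}^{\infty}x^{-2}\,dx$; these are the same estimate, and your explicit attention to the $L^2$ convergence and measurability of $R_n$ is a small bonus.
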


\begin{proof}

When $\alpha < \frac{1}{4}$, by (\ref{bound3}), we have,
\[\frac{\sigma_{n+1}^2}{\sigma_{n}^2}= \frac{\sigma_{n}^2+ \int \psi_{n+1}^2 \circ T^{n+1} dm}{\sigma_{n}^2}=\frac{\sigma_{n}^2+O(1)}{\sigma_{n}^2} \to 1.\]

Since reverse martingale differences $ (\psi_k \circ T^k)_{k \ge 1}$ is orthogonal series, then
\[\int{ R_n^2 }dm= \sum_{k \geq n} \int {\frac{\psi_k^2\circ T^k}{\sigma_k^{4}}} dm =\sum_{k \geq n} \frac{\sigma_k^2-\sigma_{k-1}^2}{\sigma_k^{4}} \le \int_{\sigma_{n-1}^2}^{\infty}{\frac{1}{x^{2}}dx}=\sigma_{n-1}^{-2},\]
\[\mathbb{E}[R_n|T^{-(n+1)}\mathcal{B}]=R_{n+1}+\mathbb{E}[\frac{\psi_n\circ T^n}{\sigma_n^{2}}|T^{-(n+1)}\mathcal{B}]=R_{n+1},\]
\[\int{ R_n^2 }dm=\sum_{k \geq n} \frac{\sigma_k^2-\sigma_{k-1}^2}{\sigma_{k-1}^{4}} \cdot \frac{\sigma_{k-1}^{4}}{\sigma_{k}^{4}}\]
\[=\sum_{k \geq n} \frac{\sigma_k^2-\sigma_{k-1}^2}{\sigma_{k-1}^{4}}(1-o(1))^{2}\ge \int_{\sigma_{n-1}^2}^{\infty}{\frac{1}{x^{2}}dx}\cdot(1-o(1))^{2}.\]
Hence $\frac{\int{R_n^2}dm}{\sigma_n^{-2}}=\frac{\int{R_n^2}dm}{\sigma_{n-1}^{-2}} \cdot \frac{\sigma_{n-1}^{-2}}{\sigma_{n}^{-2}} \to 1, (R_n)_{n\ge 1} \text{ is reverse martingale}.$
\end{proof}

\subsubsection*{Skorokhod embedding for $R_n$}

\begin{lemma}[see \cite{SH}, Theorem 2]\label{embed}

  There are a constant $C>1$, optional times $ \tau_n \searrow 0$ and a
  Brownian motion $ (B_t)_{t \ge 0}$ on an extended probability space of
  $ ( [0,1], \mathcal{B}, dm)$ such that:
\begin{align}
&R_n=B_{\tau_n},\\
&\mathbb{E}[\tau_n-\tau_{n+1}|\mathcal{G}_{n+1}]=\mathbb{E}[\frac{\psi_n^2 \circ T^{n}}{\sigma_n^{4}}|T^{-(n+1)}\mathcal{B}], \text{ where } \mathcal{G}_n=\sigma(\tau_i, T^{-i}\mathcal{B}, i\geq n),\\
&\frac{1}{C} \cdot \mathbb{E}[(\tau_n-\tau_{n+1})^2|\mathcal{G}_{n+1}] \le \mathbb{E}[\frac{\psi_n^4\circ T^{n}}{\sigma_n^{8}}|T^{-(n+1)}\mathcal{B}] \le C \cdot \mathbb{E}[(\tau_n-\tau_{n+1})^2|\mathcal{G}_{n+1}].
\end{align}
\end{lemma}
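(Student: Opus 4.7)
The plan is to apply the Skorokhod embedding theorem for reverse martingales of [SH, Theorem 2] to the sequence $(R_n)_{n \ge 1}$. The structural prerequisites are already supplied by Lemma \ref{neighbound}: $(R_n)$ is a reverse martingale with respect to $(T^{-n}\mathcal{B})_{n \ge 1}$ and $\int R_n^2\,dm \sim \sigma_{n-1}^{-2} \to 0$, so by the reverse martingale convergence theorem $R_n \to 0$ both a.s.\ and in $L^2$. This is the exact setting in which the Skorokhod procedure produces a decreasing family of optional times on a Brownian motion representing the martingale.

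The construction itself should proceed iteratively from the tail on an enlargement of $([0,1], \mathcal{B}, dm)$ carrying independent auxiliary randomness. Set $\tau_\infty := 0$ and, given $\tau_{n+1}$, choose $\tau_n \ge \tau_{n+1}$ as the first exit time after $\tau_{n+1}$ of a Brownian motion $(B_t)_{t \ge 0}$ from a conditionally random interval whose endpoints are selected so that $B_{\tau_n} - B_{\tau_{n+1}}$ has the conditional law of the increment
\[
R_n - R_{n+1} = \frac{\psi_n \circ T^n}{\sigma_n^2}
\]
given $T^{-(n+1)}\mathcal{B}$. Optional stopping for $B_t^2 - t$ then delivers identity (b):
\[
\mathbb{E}[\tau_n - \tau_{n+1} \mid \mathcal{G}_{n+1}] = \mathbb{E}[(R_n - R_{n+1})^2 \mid \mathcal{G}_{n+1}] = \mathbb{E}\!\left[\frac{\psi_n^2 \circ T^n}{\sigma_n^4} \,\Big|\, T^{-(n+1)}\mathcal{B}\right].
\]

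The two-sided bound (c) is the quartic analogue of (b). For the classical one-dimensional Skorokhod construction one has universal constants for which
\[
\mathbb{E}[(\tau_n - \tau_{n+1})^2 \mid \mathcal{G}_{n+1}] \text{ is comparable to } \mathbb{E}[(R_n - R_{n+1})^4 \mid \mathcal{G}_{n+1}],
\]
using optional stopping for $B_t^4 - 6\int_0^t B_s^2 ds$; after substituting the explicit form of $R_n - R_{n+1}$, this is exactly the stated inequality.

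The main obstacle I expect is not in the abstract embedding, which is classical, but in verifying sufficient integrability of the increments to make all of the conditional fourth-moment identities rigorous. Here the uniform bounds in (\ref{bound3}) are available for every exponent $r < 1/(2\alpha)$, and the increments $R_n - R_{n+1}$ are additionally normalized by $\sigma_n^2 \succsim n^\gamma$, which together provide ample integrability to meet the hypotheses of [SH, Theorem 2]; the only care needed is to work with an extended probability space large enough to support both the auxiliary randomization of each interval and the resulting Brownian motion $(B_t)_{t \ge 0}$.
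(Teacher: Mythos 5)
The paper offers no proof of this lemma at all: it is quoted verbatim from \cite{SH} (Theorem 2), applied to the sequence $(R_n)_{n\ge 1}$ whose required properties — the reverse martingale structure with respect to $(T^{-n}\mathcal{B})_{n\ge 1}$ and $\int R_n^2\,dm \approx \sigma_n^{-2} \to 0$ — were established in Lemma \ref{neighbound}. Your proposal does exactly this (verify the hypotheses via Lemma \ref{neighbound}, then invoke \cite{SH}), so it matches the paper's approach; your additional sketch of the internal embedding construction (randomized exit times, optional stopping for $B_t^2-t$, quartic moment comparability) is material that both you and the paper ultimately delegate to the cited theorem, and it need not be reproduced here.
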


\subsubsection*{Approximation for reverse martingale differences $(\psi_n
  \circ T^n)_{n \ge 1}$}\

\begin{lemma}\label{criteria}
  When $\alpha< \frac{1}{4}$, let
  $\delta_n^2:=\int{R^2_n}dm. \text{ If there is } \epsilon_0>0, \text{ such that
  }$
  \[\tau_n-\delta_n^2=o(\delta_n^{2+\epsilon_0}) \text{ a.s.}\]

  Then there is small $\epsilon'>0$ s.t.
  \[  |\sum_{i \leq n} \psi_i
    \circ T^i -\sum_{i \leq
      n}(B_{\delta_i^2}-B_{\delta_{i+1}^2})\cdot\sigma_i^2|=o(\sigma_n^{1-\epsilon'})
    \text{ a.s.}\]
\end{lemma}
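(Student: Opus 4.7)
The plan is to exploit the Skorokhod representation to rewrite the reverse martingale sum in terms of Brownian increments, and then estimate the discrepancy produced by replacing the random times $\tau_i$ by the deterministic times $\delta_i^2$, using H\"older regularity of the Brownian path.

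\textbf{Step A (algebraic reduction).} From $R_n = \sum_{k \geq n} \psi_k\circ T^k / \sigma_k^2$ one gets $R_n - R_{n+1} = \psi_n\circ T^n/\sigma_n^2$, and the embedding $R_n = B_{\tau_n}$ (Lemma \ref{embed}) then yields
\[
\psi_n\circ T^n = \sigma_n^2\,(B_{\tau_n}-B_{\tau_{n+1}}).
\]
Setting $D_i := B_{\tau_i}-B_{\delta_i^2}$, the difference I need to control is
\[
\sum_{i\le n}\psi_i\circ T^i - \sum_{i\le n}\sigma_i^2(B_{\delta_i^2}-B_{\delta_{i+1}^2}) \;=\; \sum_{i\le n}\sigma_i^2(D_i-D_{i+1}),
\]
and Abel summation converts this to
\[
\sigma_1^2 D_1 - \sigma_n^2 D_{n+1} + \sum_{i=2}^n(\sigma_i^2 - \sigma_{i-1}^2)\,D_i.
\]

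\textbf{Step B (pointwise bound on $D_i$).} Since $\tau_i,\delta_i^2\to 0$ and are a.s.\ bounded, the $(1/2-\eta)$-H\"older property of Brownian paths on a bounded interval (valid for any $\eta>0$ with a random a.s.-finite constant $C(\omega)$) gives
\[
|D_i|\le C(\omega)\,|\tau_i-\delta_i^2|^{1/2-\eta}.
\]
Combined with the hypothesis $\tau_i - \delta_i^2 = o(\delta_i^{2+\epsilon_0})$ a.s.\ and the relation $\delta_i\approx\sigma_i^{-1}$ supplied by Lemma \ref{neighbound}, this gives $|D_i| = o\bigl(\sigma_i^{-(2+\epsilon_0)(1/2-\eta)}\bigr)$. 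Choosing $\eta$ small enough that $(2+\epsilon_0)(1/2-\eta) > 1$ produces an $\epsilon''>0$ with $|D_i|=o(\sigma_i^{-(1+\epsilon'')})$ a.s.

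\textbf{Step C (assembly of the three terms).} The boundary term $\sigma_1^2 D_1$ is $O(1)$, and $|\sigma_n^2 D_{n+1}| = \sigma_n^2\cdot o(\sigma_{n+1}^{-(1+\epsilon'')}) = o(\sigma_n^{1-\epsilon''})$ using $\sigma_{n+1}/\sigma_n\to 1$. For the Abel sum, the increments $\sigma_i^2 - \sigma_{i-1}^2 = \int\psi_i^2\circ T^i\,dm$ are uniformly bounded by (\ref{bound3}) with $r=2$ (which requires $\alpha<1/4$); a Riemann-sum comparison with $\int_{\sigma_1^2}^{\sigma_n^2} x^{-(1+\epsilon'')/2}\,dx$ gives
\[
\sum_{i=2}^n(\sigma_i^2-\sigma_{i-1}^2)\,\sigma_i^{-(1+\epsilon'')}=O(\sigma_n^{1-\epsilon''}).
\]
Factoring out the $o(1)$ in the bound on $|D_i|$ via a standard $\varepsilon$-splitting (write $|D_i|\le b_i\sigma_i^{-(1+\epsilon'')}$ with $b_i\downarrow 0$, split the sum at a threshold where $b_i<\varepsilon$) upgrades this to $o(\sigma_n^{1-\epsilon''})$. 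Taking $\epsilon'=\epsilon''$ completes the proof.

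\textbf{Main obstacle.} The delicate part is ensuring the H\"older exponent $(2+\epsilon_0)(1/2-\eta)$ genuinely exceeds $1$; this uses the full strength of the hypothesis $\epsilon_0>0$ and is where the sharpness of Lemma \ref{criteria} resides. A secondary but technical point is the $o$-to-$o$ transfer through the Abel sum, which rests on the boundedness of $\sigma_i^2-\sigma_{i-1}^2$ (hence on $\alpha<1/4$) and on the standard Kronecker-type splitting argument.
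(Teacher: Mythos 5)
Your proposal is correct and follows essentially the same route as the paper's proof: the identity $\psi_i\circ T^i=\sigma_i^2(B_{\tau_i}-B_{\tau_{i+1}})$ from the Skorokhod embedding, Abel summation to isolate boundary terms plus a sum of $(\sigma_i^2-\sigma_{i-1}^2)(B_{\tau_i}-B_{\delta_i^2})$, the a.s.\ $(1/2-\eta)$-H\"older bound on the Brownian path near $0$, and the choice of exponent so that $(2+\epsilon_0)(1/2-\eta)>1$. The only cosmetic difference is that you handle the $o$-to-$o$ transfer by an explicit $\varepsilon$-splitting of the Abel sum, whereas the paper fixes $m\gg1$ and absorbs the initial segment separately; these are equivalent devices.
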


\begin{proof}
By Lemma \ref{neighbound}, $\delta_n^2 \approx \sigma_n^{-2}$.
By Lemma \ref{embed} and definition of $R_n$, we have for any $i \ge 1$:
\[B_{\tau_i}=R_i=\sum_{k \geq i} \frac{\psi_k\circ T^k}{\sigma_k^{2}},\]
\[B_{\tau_{i}}-B_{\tau_{i+1}}=\frac{\psi_i\circ T^i}{\sigma_i^{2}}.\]

That is, for any $i \ge 1$:
\[\psi_i\circ T^i=(B_{\tau_{i}}-B_{\tau_{i+1}}) \cdot \sigma_i^{2}.\]

For $ m<n $, write $ \sum_{i \leq n} \psi_i \circ T^i$ as:
\[\sum_{i \leq m-1} \psi_i \circ T^i+ \sum_{m\leq i \leq n} \psi_i \circ T^i =\sum_{i \leq m-1} \psi_i \circ T^i+ \sum_{m\leq i \leq n}(B_{\tau_i}-B_{\tau_{i+1}})\cdot \sigma_i^{2}\]
\[= \sum_{i \leq m-1} \psi_i \circ T^i+ B_{\delta^2_m} \cdot \sigma_m^{2}-B_{\delta^2_{n+1}} \cdot \sigma_n^{2}+\sum_{m+1\leq i \leq n} B_{\delta_i^2}\cdot(\sigma_i^{2}-\sigma_{i-1}^{2})+ e_{m,n},\]
where the error term:
\[ e_{m,n}=\sum_{m+1\leq i \leq n}(B_{\tau_i}-B_{\delta_i^2})\cdot(\sigma_i^{2}-\sigma_{i-1}^{2})+(B_{\tau_m}-B_{\delta_m^2})\cdot\sigma_m^{2}-(B_{\tau_{n+1}}-B_{\delta_{n+1}^2})\cdot\sigma_n^{2}.\]

By H\"older continuity of Brownian motion near $t=0$, for any $ c< \frac{1}{2}$, fixed $ m \gg 1$:
\[ |e_{m,n}|\leq \sum_{m+1\leq i \leq n}|\tau_i-\delta_i^2|^c \cdot (\sigma_i^{2}-\sigma_{i-1}^{2})+|\tau_m-\delta_m^2|^c \cdot \sigma_m^{2}+|\tau_{n+1}-\delta_{n+1}^2|^c \cdot \sigma_n^{2}\]
\[\leq \sum_{m+1\leq i \leq n} o(\delta_i^{(2+\epsilon_0)c})\cdot (\sigma_i^{2}-\sigma_{i-1}^{2}) + o(\delta_m^{(2+\epsilon_0)c} ) \cdot \sigma_m^{2}+o(\delta_{n+1}^{(2+\epsilon_0)c})\cdot \sigma_n^{2} \precsim o( \sigma_n^{2-(2+\epsilon_0)c}).\]

We can choose $ c< \frac{1}{2}$, s.t. $2-(2+\epsilon_0)c<1 $, then there is small $\epsilon'>0$ s.t. $2-(2+\epsilon_0)c<1-\epsilon'$ and $ |e_{m,n}|=o(\sigma_n^{1-\epsilon'})  \text{ a.s.}$.

Therefore, $(\psi_n \circ T^n)_{n \ge 1}$ satisfies:
\[ |\sum_{i \leq n} \psi_i \circ T^i -\sum_{i \leq n}(B_{\delta_i^2}-B_{\delta_{i+1}^2})\cdot\sigma_i^2|=o(\sigma_n^{1-\epsilon'}) \text{ a.s.}\]
\end{proof}

\subsubsection*{ASIP for $(\phi_n\circ T^n)_{n\ge1}$}

\begin{lemma} \label{criteria1}
  $(\phi_n\circ T^n)_{n\ge1}$ satisfies ASIP if
  \[4 \alpha< \gamma,  \text{ and there is }
  \epsilon_0>0, \text{ s.t. } \tau_n-\delta_n^2=o(\delta_n^{2+\epsilon_0})
  \text{ a.s.}\]

\end{lemma}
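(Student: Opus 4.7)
The strategy is to assemble the three ingredients already prepared: the martingale decomposition (\ref{martingale}), the strong approximation from Lemma \ref{criteria}, and the uniform $L^r$ bound (\ref{bound2}) for the error $H_{n+1}\circ T^{n+1}$. The natural choice of Gaussian sequence for the ASIP is
\[
G_k := (B_{\delta_k^2} - B_{\delta_{k+1}^2}) \cdot \sigma_k^2,
\]
where $B_t$ is the Brownian motion from Lemma \ref{embed}. Since the deterministic times $\delta_k^2$ are strictly decreasing, the intervals $[\delta_{k+1}^2, \delta_k^2]$ are pairwise disjoint, and so the $G_k$ are independent mean-zero Gaussians. Using $\delta_n^2 - \delta_{n+1}^2 = (\sigma_n^2 - \sigma_{n-1}^2)/\sigma_n^4$, which is immediate from the definition of $R_n$, one computes $\mathbb{E}G_k^2 = \sigma_k^2 - \sigma_{k-1}^2$, so that $\sum_{k\le n}\mathbb{E}G_k^2 = \sigma_n^2 = \Sigma_n^2 + O(1)$ by Lemma \ref{close variance}; this is already stronger than the variance condition required for the ASIP.

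For the approximation itself, Lemma \ref{criteria} applied with the second hypothesis $\tau_n - \delta_n^2 = o(\delta_n^{2+\epsilon_0})$ immediately yields
\[
\Bigl| \sum_{i\le n} \psi_i\circ T^i - \sum_{i\le n} G_i \Bigr| = o(\sigma_n^{1-\epsilon'}) \quad \text{a.s.}
\]
for some $\epsilon'>0$. By the decomposition (\ref{martingale}), it remains to upgrade the $L^r$ bound (\ref{bound2}) on $H_{n+1}\circ T^{n+1}$ to a pointwise a.s. bound of order $o(\Sigma_n^{1-\epsilon})$. This is the main obstacle: for any $r<\frac{1}{2\alpha}$ and any $\epsilon>0$, Markov's inequality gives
\[
m\bigl(|H_{n+1}\circ T^{n+1}| > \Sigma_n^{1-\epsilon}\bigr) \precsim \Sigma_n^{-r(1-\epsilon)} \precsim n^{-\gamma r(1-\epsilon)/2},
\]
and Borel--Cantelli demands $\gamma r(1-\epsilon)/2 > 1$; letting $r\nearrow\frac{1}{2\alpha}$ and $\epsilon\searrow 0$, this is achievable precisely under the first hypothesis $\gamma>4\alpha$. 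A small further decrease of $\epsilon$ then converts the resulting $O$-bound into the desired $o$-bound.

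Combining the two approximations via (\ref{martingale}), and replacing $\sigma_n$ by $\Sigma_n$ through Lemma \ref{close variance}, produces
\[
\Bigl| \sum_{k\le n}\phi_k\circ T^k - \sum_{k\le n} G_k \Bigr| = o(\Sigma_n^{1-\epsilon}) \quad \text{a.s.}
\]
for some small $\epsilon>0$, which together with the variance identity above is exactly the definition of ASIP. The two hypotheses of the lemma are used once each: $4\alpha<\gamma$ is consumed entirely by the Borel--Cantelli step on $H_{n+1}\circ T^{n+1}$, and $\tau_n-\delta_n^2 = o(\delta_n^{2+\epsilon_0})$ is consumed entirely by Lemma \ref{criteria}.
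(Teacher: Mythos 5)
Your proposal is correct and follows essentially the same route as the paper: the same Gaussian sequence $G_k=(B_{\delta_k^2}-B_{\delta_{k+1}^2})\cdot\sigma_k^2$, the same telescoping variance computation combined with Lemma \ref{close variance}, and the same Markov--Borel--Cantelli argument on $H_{n+1}\circ T^{n+1}$ via (\ref{bound2}), where the condition $4\alpha<\gamma$ is exactly what makes the interval of admissible exponents $r\in(\frac{2}{\gamma},\frac{1}{2\alpha})$ nonempty. The only differences are cosmetic (you spell out the independence of the $G_k$ and the $O$-to-$o$ upgrade, which the paper leaves implicit).
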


\begin{proof}
From Lemma \ref{criteria} and (\ref{martingale}):
\[\sum_{i\le n} \phi_i \circ T^i=\sum_{i \leq n}(B_{\delta_i^2}-B_{\delta_{i+1}^2})\cdot\sigma_i^2+o(\sigma_n^{1-\epsilon'})+H_{n+1} \circ T^{n+1} \text{ a.s.}\]
By (\ref{bound2}), take $ r< \frac{1}{2\alpha}$, $r> \frac{2}{\gamma}$, there is $ \epsilon'>0 $ s.t. $\frac{ \gamma \cdot r}{2}\cdot(1-\epsilon')>1$ and
\[\int |\frac{ H_{n+1} \circ T^{n+1}}{\sigma_n^{(1-\epsilon')}} |^rdm \precsim \frac{1}{n^{\frac{\gamma \cdot r}{2}\cdot (1-\epsilon')}}.\]

That is, when $4\alpha< \gamma$, there is $r\in (\frac{2}{\gamma}, \frac{1}{2\alpha})$,  $ \epsilon'>0 $ s.t. $\frac{ \gamma \cdot r}{2}\cdot(1-\epsilon')>1$ and
\[\int |\frac{ H_{n+1} \circ T^{n+1}}{\sigma_n^{(1-\epsilon')}} |^r dm \precsim \frac{1}{n^{\frac{\gamma \cdot r}{2}\cdot (1-\epsilon')}}.\]

By the Borel-Cantelli Lemma:
\begin{equation}\label{error rate}
H_{n+1} \circ T^{n+1}=o(\sigma_n^{1-\epsilon'}) \text{ a.s. }
\end{equation}
If we define $G_i:=(B_{\delta_i^2}-B_{\delta_{i+1}^2})\cdot\sigma_i^2$, then by (\ref{error rate}) and Lemma \ref{close variance},
\[ |\sum_{i \leq n} \phi_i \circ T^i -\sum_{i \leq n}G_i|=o(\sigma_n^{1-\epsilon'})=o(\Sigma_n^{1-\epsilon'}) \text{ a.s.}\]

Besides, by Lemma \ref{close variance} again,
\[\sum_{i\le n} \mathbb{E} G_{i}^2=\sum_{i\le n} \mathbb{E}{[(B_{\delta_i^2}-B_{\delta_{i+1}^2})\cdot\sigma_i^2]^2} =\sum_{i\le n}(\delta_i^2-\delta_{i+1}^2)\cdot\sigma_i^4= \sum_{i\le n} \frac{\mathbb{E}\psi_i^2\circ T^i}{\sigma_i^{4}}\cdot \sigma_i^4\]
\[=\sigma_n^2=\Sigma_n^2+O(1)=\Sigma_n^2+O(\Sigma_n^{2(1-\epsilon')}).\]

Therefore, $(\phi_n\circ T^n)_{n\ge1}$ satisfies ASIP.
\end{proof}

\subsection*{Step 4: Estimates for ASIP}

From Lemma \ref{criteria1}, we only need to find the conditions for
$\gamma<1 $ and $\alpha<\frac{1}{4}$ in order that there is $\epsilon_0>0$ such that
\[ \tau_n-\delta_n^2=o(\delta_n^{2+\epsilon_0}) \text{ a.s.}\]

Decompose $ \tau_n-\delta_n^2 $ as three terms: $R^{'}_n+R^{''}_n+S_n$:
\[ R^{'}_n=\sum_{i \geq n} ( \tau_i - \tau_{i+1}-\mathbb{E}[\frac{\psi_i^2 \circ T^{i}}{\sigma_i^{4}}   |T^{-(i+1)} \mathcal{B}] ),\]
\[ R^{''}_n=\sum_{i \geq n} (\mathbb{E} [\frac{\psi_i^2 \circ T^{i}}{\sigma_i^{4}}|T^{-(i+1)} \mathcal{B}]-\frac{\psi_i^2 \circ T^i}{\sigma_i^{4}} ), \]
\[ S_n=\sum_{i \geq n} (\frac{\psi_i^2 \circ T^i}{\sigma_i^{4}}-\mathbb{E}\frac{\psi_i^2 \circ T^i}{\sigma_i^{4}}). \]

\subsubsection*{Estimate $R_n^{'}$ and $R_n^{''}$}

First note that $ R_n^{'}$, $ R_n^{''}$ are reverse martingales
with respect to filtrations $(\mathcal{G}_n)_{n \ge 1}$ and $ (T^{-n} \mathcal{B})_{n \ge 1}$ respectively:
\begin{lemma}
\[ \alpha < \frac{1}{8},\gamma > \frac{2}{3} \implies
  R_n^{'}=o(\delta_n^{2+\epsilon_0}),
  R_n^{''}=o(\delta_n^{2+\epsilon_0}).\]

\end{lemma}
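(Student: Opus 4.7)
\emph{Proof plan.} The two tail sums $R_n'$ and $R_n''$ are reverse martingales with respect to $(\mathcal{G}_n)_{n\ge 1}$ and $(T^{-n}\mathcal{B})_{n\ge 1}$ respectively, so the plan is to establish sharp $L^2$ estimates via orthogonality of their differences and then upgrade to almost-sure decay through Doob's maximal inequality for reverse martingales along a geometric subsequence, combined with the Borel-Cantelli lemma.

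The role of $\alpha<\frac{1}{8}$ is to render $r=4$ admissible in (\ref{bound3}), which yields $\sup_i\|\psi_i\circ T^i\|_{L^4}<\infty$, equivalently $\sup_i\|\psi_i^2\circ T^i\|_{L^2}<\infty$ and $\sup_i\mathbb{E}[\psi_i^4\circ T^i]<\infty$. Combined with Lemma \ref{embed}, the reverse-martingale differences
\[D_i':=(\tau_i-\tau_{i+1})-\mathbb{E}[\tau_i-\tau_{i+1}\mid\mathcal{G}_{i+1}],\quad D_i'':=\mathbb{E}[\psi_i^2\circ T^i/\sigma_i^4\mid T^{-(i+1)}\mathcal{B}]-\psi_i^2\circ T^i/\sigma_i^4\]
then satisfy
\[\|D_i'\|_{L^2}^2\le\mathbb{E}[(\tau_i-\tau_{i+1})^2]\precsim\mathbb{E}[\psi_i^4\circ T^i]/\sigma_i^8\precsim\frac{1}{i^{4\gamma}},\qquad \|D_i''\|_{L^2}\precsim\frac{\|\psi_i^2\circ T^i\|_{L^2}}{\sigma_i^4}\precsim\frac{1}{i^{2\gamma}}.\]
Orthogonality of the reverse-martingale differences then gives $\|R_n'\|_{L^2}^2,\,\|R_n''\|_{L^2}^2\precsim\sum_{i\ge n}i^{-4\gamma}\precsim n^{-(4\gamma-1)}$, the tail being summable because $4\gamma>1$.

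To pass to an a.s.\ bound, fix any $\rho>1$, set $n_k:=\lfloor\rho^k\rfloor$, and apply Doob's maximal inequality for reverse martingales: $\|\sup_{m\ge n_k}|R_m'|\|_{L^2}\precsim\|R_{n_k}'\|_{L^2}\precsim n_k^{-(2\gamma-1/2)}$, and analogously for $R_n''$. Since $\sigma_n^2\succsim n^{\gamma}$ and $\delta_n^2\approx\sigma_n^{-2}$ is deterministic, the inequality $\delta_n^{2+\epsilon_0}\precsim n^{-\gamma(1+\epsilon_0/2)}$ reduces the task to proving $R_n',R_n''=o(n^{-\gamma(1+\epsilon_0/2)})$ a.s. Bounding $\sup_{n_k\le n<n_{k+1}}|R_n'|$ by $\sup_{m\ge n_k}|R_m'|$ and applying Chebyshev,
\[\sum_k\Pr\Bigl(\sup_{m\ge n_k}|R_m'|>\epsilon\,n_{k+1}^{-\gamma(1+\epsilon_0/2)}\Bigr)\precsim\sum_k n_k^{\,1-2\gamma+\gamma\epsilon_0},\]
a series which, with $n_k=\rho^k$, converges as soon as $\gamma>\frac{1}{2-\epsilon_0}$. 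Choosing $\epsilon_0>0$ sufficiently small, this is comfortably ensured by the lemma's hypothesis $\gamma>\frac{2}{3}$, and Borel-Cantelli finishes the argument for $R_n'$; the identical scheme handles $R_n''$.

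The main obstacle is the moment bookkeeping: once $\alpha<\frac{1}{8}$ supplies the necessary $L^4$-control of $\psi_i\circ T^i$ and orthogonality is invoked, the remainder is the routine Doob/Borel-Cantelli exponent matching. It is worth noting that only the weaker constraint $\gamma>\frac{1}{2}$ actually emerges from this step; the stronger hypothesis $\gamma>\frac{2}{3}$ is not forced here but rather by the non-martingale remainder $S_n$ treated in the next step.
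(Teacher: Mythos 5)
Your moment bookkeeping in the first half is sound and matches the paper: $\alpha<\frac{1}{8}$ makes $r=4$ admissible in (\ref{bound3}), the differences of $R_n'$ and $R_n''$ are square-integrable reverse-martingale differences, and orthogonality plus Doob's maximal inequality give $\mathbb{E}\bigl[\sup_{m\ge n}|R_m'|^2\bigr]\precsim \mathbb{E}|R_n'|^2$. The proof breaks at the reduction step. You claim that $\delta_n^{2+\epsilon_0}\precsim n^{-\gamma(1+\epsilon_0/2)}$ ``reduces the task to proving $R_n'=o(n^{-\gamma(1+\epsilon_0/2)})$''; this inequality goes the wrong way. It says the target $\delta_n^{2+\epsilon_0}$ is \emph{at most} $n^{-\gamma(1+\epsilon_0/2)}$, so $o(n^{-\gamma(1+\epsilon_0/2)})$ is a \emph{weaker} conclusion than $o(\delta_n^{2+\epsilon_0})$, not a sufficient one. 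The point you are implicitly suppressing is that $\gamma$ is only a one-sided bound: the hypothesis is $\sigma_n^2\succsim n^\gamma$, while (\ref{bound3}) forces only $\sigma_n^2\precsim n$, so $\sigma_n^2$ may grow linearly, in which case $\delta_n^{2+\epsilon_0}\approx n^{-(1+\epsilon_0/2)}\ll n^{-\gamma(1+\epsilon_0/2)}$ and your conclusion misses the target by a polynomial factor. If you repair the reduction honestly, the Chebyshev ratio becomes $\mathbb{E}|R_n'|^2/\delta_n^{4+2\epsilon_0}\precsim n^{1-4\gamma}\,\sigma_n^{4+2\epsilon_0}\precsim n^{3+\epsilon_0-4\gamma}$, which needs $\gamma>\frac{3}{4}$ --- worse than the lemma. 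The paper's way around this is the Abel summation in its proof: it rewrites $\mathbb{E}|R_n'|^2\precsim\sum_{i\ge n}\int\psi_i^4\circ T^i\,dm/\sigma_i^8$ as a bound $\precsim\sigma_n^{-(8-2/\gamma)}$, i.e.\ purely a power of $\sigma_n$, so the Chebyshev ratio is the \emph{negative} power $\sigma_n^{-(4-2\epsilon_0-2/\gamma)}$, to which the lower bound $\sigma_n^2\succsim n^\gamma$ applies with the correct sign, yielding $n^{-(2\gamma-1-\epsilon_0\gamma)}$.

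The second gap is the geometric subsequence $n_k=\lfloor\rho^k\rfloor$. After Borel--Cantelli along $(n_k)$ you must fill in $n\in[n_k,n_{k+1}]$, which requires $\delta_{n_k}^{2+\epsilon_0}/\delta_n^{2+\epsilon_0}=O(1)$, i.e.\ $\sigma_{n_{k+1}}^2/\sigma_{n_k}^2=O(1)$. But the increments satisfy only $\sigma_{n_{k+1}}^2-\sigma_{n_k}^2\precsim n_{k+1}-n_k\approx(\rho-1)n_k$, while $\sigma_{n_k}^2$ may be as small as $n_k^{\gamma}$, so the ratio can be of order $n_k^{1-\gamma}$, unbounded for $\gamma<1$. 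This is exactly why the paper uses the polynomial subsequence $\lfloor N^\omega\rfloor$: there the gap is $\approx N^{\omega-1}$ and the ratio is $O(1)$ precisely when $\gamma\ge 1-\frac{1}{\omega}$, and balancing this against the Borel--Cantelli condition $\omega(2\gamma-1-\epsilon_0\gamma)>1$ is what produces the threshold $\gamma>\frac{2}{3}$ \emph{inside this very lemma}. Your closing remark --- that this step only forces $\gamma>\frac{1}{2}$ and that $\frac{2}{3}$ comes from the later estimate of $S_n$ --- is therefore incorrect: it is an artifact of the two errors above. (Your argument would be valid, and would indeed give a better exponent, under the stronger two-sided assumption $\sigma_n^2\approx n^\gamma$, but that is not what the theorem assumes.)
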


\begin{proof}
  By (\ref{bound3}), $K_n := \sum_{i \leq n}\int{|\psi_i^4 \circ T^{i}|} dm \precsim
  n \precsim
  \sigma_n^{\frac{2}{\gamma}}$. By the martingale maximal inequality:
\[ \mathbb{E} |\frac{|\sup_{i \geq n}R'_i|}{\delta_n^{2+\epsilon_0}}|^2 \precsim \frac{1}{\delta_n^{2(2+\epsilon_0)}} \mathbb{E} {|R'_n|^2} \precsim \frac{1}{\delta_n^{2(2+\epsilon_0)}} \sum_{i \geq n}\int{\frac{\psi_i^4 \circ T^{i}}{\sigma_i^{8}}} dm  \]
\[ =\frac{1}{\delta_n^{2(2+\epsilon_0)}} \cdot(-\frac{K_{n-1}}{\sigma_n^{8}} + \sum_{i \geq n} K_i \cdot (\frac{1}{\sigma_i^{8}}-\frac{1}{\sigma_{i+1}^{8}}) ) \precsim \frac{1}{\delta_n^{2(2+\epsilon_0)}}\cdot (\frac{K_{n-1}}{\sigma_n^{8}} + \sum_{i \geq n} K_i \cdot \frac{(\sigma_{i+1}^{8}-\sigma_{i}^{8})}{\sigma_i^{16}}) \]
\[\precsim \frac{1}{\delta_n^{2(2+\epsilon_0)}}\cdot(\frac{1}{\sigma_n^{8-\frac{2}{\gamma}}} +\sum_{i \geq n}  (\frac{\sigma_{i+1}^{8}-\sigma_{i}^{8}}{\sigma_i^{16-\frac{2}{\gamma}}}) )\precsim \frac{1}{\delta_n^{2(2+\epsilon_0)}}\cdot(\frac{1}{\sigma_n^{8-\frac{2}{\gamma}}} + \int_{\sigma_n^{8}}^\infty \frac{1}{x^{\frac{16-\frac{2}{\gamma}}{8}}} dx).\]

When $ \gamma > \frac{1}{4}$, the last integral converges, hence:
\[ \mathbb{E} |\frac{|\sup_{i \geq n}R'_i|}{\delta_n^{2+\epsilon_0}}|^2 \precsim \frac{1}{ \sigma_n^{4-2\epsilon_0 -\frac{2}{\gamma}}} \precsim \frac{1}{n^{2\gamma-1-\epsilon_0\gamma}}.\]

Choose $ \omega>0, \text{ s.t. }\omega \cdot (2\gamma-1-\epsilon_0\gamma) > 1 $, by Borel-Cantelli Lemma:
\[ \sup_{i \ge \lfloor N^\omega \rfloor}R'_{i}=o(\delta_{\lfloor N^\omega \rfloor}^{2+\epsilon_0})\text{ a.s.} \]

For any $ n, \text{ there is } N$ s.t.
$ \lfloor N^\omega \rfloor \leq n \leq \lfloor (N+1)^\omega \rfloor$:
\[\frac{|R'_n|}{\delta_n^{2+\epsilon_0}} \leq  \frac{\sup_{i \geq \lfloor N^\omega \rfloor }|R'_i|}{\delta_{\lfloor N^\omega \rfloor}^{2+\epsilon_0}} \cdot \frac{\delta_{\lfloor N^\omega \rfloor}^{2+\epsilon_0}}{\delta_{n}^{2+\epsilon_0}}\le o(1)\cdot \frac{\delta_{\lfloor N^\omega \rfloor}^{2+\epsilon_0}}{\delta_{n}^{2+\epsilon_0}} \text{ a.s.} \]

Since $\alpha< \frac{1}{8}$, using (\ref{bound3}) and Lemma \ref{neighbound}, we have
\[ \frac{\delta_{\lfloor N^\omega \rfloor}^{2}}{\delta_{n}^{2}} \precsim \frac{\sigma_n^{2}}{\sigma_{\lfloor N^\omega \rfloor}^{2}}=\frac{\sigma_{\lfloor N^\omega \rfloor}^{2}+\sigma_{n}^{2}-\sigma_{\lfloor N^\omega \rfloor}^{2}}{\sigma_{\lfloor N^\omega \rfloor}^{2}} \precsim 1+ \frac{n-\lfloor N^\omega \rfloor}{\sigma_{\lfloor N^\omega \rfloor}^{2}}\precsim 1+ \frac{N^{\omega-1}}{N^{\omega \cdot \gamma}}.\]

Hence when $ \gamma \geq 1- \frac{1}{\omega}, \alpha < \frac{1}{8}$,  $\frac{\delta_{\lfloor N^\omega \rfloor}^{2+\epsilon_0}}{\delta_{n}^{2+\epsilon_0}}=O(1), |R'_n|=o(\delta_n^{2+\epsilon_0}) \text{ a.s.} $

If $\gamma> \frac{2}{3}$, we can find $\omega$ and small $\epsilon_0$ such that $ \omega \cdot (2\gamma-1-\epsilon_0\gamma) > 1,  \gamma \geq 1- \frac{1}{\omega}$ are all satisfied. Then
\[ R'_n=o(\delta_n^{2+\epsilon_0}) \text{ a.s.} \text{ when } \alpha < \frac{1}{8},\gamma > \frac{2}{3}.  \]

The estimate of $R^{''}_n$ is similar.
\end{proof}

\subsubsection*{Estimate $S_n$}
Denote
\[ S_n':=\sum_{i\leq n}(\psi_i^2 \circ T^i-\int \psi_i^2\circ T^idm). \]

\begin{lemma} \label{crucial estimate}
 $\text{ When } \alpha< \frac{1}{4}, \text{ if there is  } \epsilon'>0$,
 \[S_n'=o(\sigma_n^{2(1-\epsilon')})\text{ a.s.}\]
 then there is $\epsilon_0>0$ s.t.
 \[S_n=o(\delta_n^{2+\epsilon_0}) \text{ a.s.} \]
\end{lemma}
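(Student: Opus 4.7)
The plan is to use Abel summation (summation by parts) to convert the backward-weighted tail series $S_n$ into an expression involving the forward partial sums $S_i'$, which are controlled by hypothesis. Writing $\psi_i^2\circ T^i-\mathbb{E}\psi_i^2\circ T^i = S_i'-S_{i-1}'$ and summing by parts, one obtains
\[S_n \;=\; -\frac{S_{n-1}'}{\sigma_n^4} \;+\; \sum_{i\ge n} S_i'\left(\frac{1}{\sigma_i^4}-\frac{1}{\sigma_{i+1}^4}\right),\]
once the boundary term $S_N'/\sigma_N^4$ at infinity is shown to vanish. The hypothesis $S_N' = o(\sigma_N^{2(1-\epsilon')})$ together with $\sigma_N\to\infty$ (from $\sigma_N^2\succsim N^{\gamma}$) gives $S_N'/\sigma_N^4 = o(\sigma_N^{-2-2\epsilon'})\to 0$, so the identity is legitimate.

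The first term is immediate: $|S_{n-1}'|/\sigma_n^4 = o(\sigma_n^{-(2+2\epsilon')})$, which by Lemma \ref{neighbound} equals $o(\delta_n^{2+2\epsilon'})$.

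For the main sum I would factor out the a.s.\ vanishing factor hidden in $S_i'$. Setting $c_i := |S_i'|/\sigma_i^{2(1-\epsilon')}$, so that $c_i\to 0$ a.s., and $c_n^*:=\sup_{i\ge n}c_i$, we have $c_n^*\to 0$ a.s.\ and
\[\left|\sum_{i\ge n} S_i'\left(\frac{1}{\sigma_i^4}-\frac{1}{\sigma_{i+1}^4}\right)\right| \;\le\; c_n^*\sum_{i\ge n} \sigma_i^{2(1-\epsilon')}\left(\frac{1}{\sigma_i^4}-\frac{1}{\sigma_{i+1}^4}\right).\]
Expanding $1/\sigma_i^4-1/\sigma_{i+1}^4=(\sigma_{i+1}^2-\sigma_i^2)(\sigma_{i+1}^2+\sigma_i^2)/(\sigma_i^4\sigma_{i+1}^4)$ and using Lemma \ref{neighbound} to replace $\sigma_{i+1}$ by $\sigma_i$ up to a bounded factor, the generic summand is $\precsim (\sigma_{i+1}^2-\sigma_i^2)/\sigma_i^{4+2\epsilon'}$; since $\sigma_{i+1}/\sigma_i\to 1$, this in turn is $\precsim \int_{\sigma_i^2}^{\sigma_{i+1}^2} du/u^{2+\epsilon'}$. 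Here the hypothesis $\alpha<1/4$ enters through (\ref{bound3}) with $r=2$ to force $\sigma_{i+1}^2-\sigma_i^2=O(1)$, so that the intervals $[\sigma_i^2,\sigma_{i+1}^2]$ tile $[\sigma_n^2,\infty)$; telescoping yields
\[\sum_{i\ge n}\sigma_i^{2(1-\epsilon')}\left(\frac{1}{\sigma_i^4}-\frac{1}{\sigma_{i+1}^4}\right) \;\precsim\; \int_{\sigma_n^2}^\infty \frac{du}{u^{2+\epsilon'}} \;\precsim\; \sigma_n^{-(2+2\epsilon')}.\]
Multiplying by $c_n^*\to 0$ upgrades the $O(\sigma_n^{-(2+2\epsilon')})$ bound to $o(\sigma_n^{-(2+2\epsilon')})$.

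Combining both estimates and setting $\epsilon_0 := 2\epsilon'$ gives $|S_n|=o(\sigma_n^{-(2+\epsilon_0)})=o(\delta_n^{2+\epsilon_0})$ a.s., as desired. The main technical subtlety is justifying the integral comparison uniformly in $i$; once the two ingredients $\sigma_{i+1}^2-\sigma_i^2=O(1)$ (from $\alpha<1/4$) and $\sigma_{i+1}^2/\sigma_i^2\to 1$ (from Lemma \ref{neighbound}) are secured, the rest is a routine summation-by-parts estimate that loses no power.
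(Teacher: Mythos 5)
Your proof is correct and follows essentially the same route as the paper's: summation by parts to write $S_n=-\frac{S_{n-1}'}{\sigma_n^4}+\sum_{i\ge n}S_i'\bigl(\frac{1}{\sigma_i^4}-\frac{1}{\sigma_{i+1}^4}\bigr)$, then the hypothesis $|S_i'|\precsim\sigma_i^{2(1-\epsilon')}$ together with Lemma \ref{neighbound} and an integral comparison to bound the tail by $\sigma_n^{-(2+2\epsilon')}\approx\delta_n^{2+2\epsilon'}$. Your only departures are refinements of detail: you justify the vanishing boundary term at infinity explicitly, and by factoring out $c_n^*=\sup_{i\ge n}|S_i'|/\sigma_i^{2(1-\epsilon')}\to 0$ you obtain the little-$o$ with $\epsilon_0=2\epsilon'$ exactly, whereas the paper settles for any $\epsilon_0<2\epsilon'$ --- both suffice for the statement.
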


\begin{proof}
By Lemma \ref{neighbound}, $\sigma_n^2 \approx \sigma_{n+1}^2$. Take any $\epsilon_0 < 2 \epsilon'$, then we have
  \[S_n=\sum_{i \geq n} (\frac{\psi_i^2 \circ T^i}{\sigma_i^{4}}-\mathbb{E}\frac{\psi_i^2 \circ T^i}{\sigma_i^{4}})=\sum_{i \ge n} \frac{S_i'-S_{i-1}'}{\sigma_i^4}=-\frac{S_{n-1}'}{\sigma_n^4}+\sum_{i\ge n}S_i'\cdot (\frac{1}{\sigma_i^4}-\frac{1}{\sigma_{i+1}^4})\]
  \[\precsim \frac{1}{\sigma_n^{2+2\epsilon'}}+\sum_{i\ge n} \frac{\sigma_{i+1}^4-\sigma_i^4}{\sigma_i^{4\cdot \frac{6+2 \epsilon'}{4}}} \precsim \frac{1}{\sigma_n^{2+2\epsilon'}}+ \int^{\infty}_{\sigma_{n}^4} \frac{1}{x^{\frac{6+2\epsilon'}{4}}}dx\precsim \delta_n^{2+2\epsilon'}=o(\delta_n^{2+\epsilon_0}).\]
 \end{proof}

\subsubsection*{Decompose $S_n'$}

To estimate $ S'_n$: from the calculation on page 1140 in \cite{NTV}, it is the sum
of following five terms:
\begin{equation} \label{1}
\sum_{i\leq n}(\phi_i^2 \circ T^i-\int \phi_i^2\circ T^idm),
\end{equation}
\begin{equation} \label{2}
\int H_{n+1}^2 \circ T^{n+1} dm,
\end{equation}
\begin{equation} \label{3}
-H_{n+1}^2\circ T^{n+1},
\end{equation}
\begin{equation} \label{4}
\sum_{i \leq n }-2 \cdot (\psi_i \circ T^i \cdot H_{i+1}\circ T^{i+1}),
\end{equation}
\begin{equation} \label{5}
 \sum_{i \leq n} 2 \cdot (\phi_i\circ T^i \cdot H_i\circ T^i - \int \phi_i\circ T^i \cdot H_i\circ T^i dm).
\end{equation}

\subsubsection*{Estimate (\ref{1}):}

By Sprindzuk's Theorem in \cite{S},
\[(\ref{1}) \precsim n^{\frac{1}{2}}=o(n^{\gamma(1-\epsilon')})\le o(\sigma_n^{2(1-\epsilon')}) \text{ a.s. if } \gamma> \frac{1}{2}, \epsilon' \text{ is small.} \]

\subsubsection*{Estimate (\ref{2}):}

By (\ref{bound2}),
\[ (\ref{2})=O(1) \le o(\sigma_n^{2(1-\epsilon')}) \text{ a.s. if } \alpha < \frac{1}{4}, \epsilon' \text{ is small.}\]

\subsubsection*{Estimate (\ref{3}):}

By (\ref{error rate}),
\[ (\ref{3})=o(\sigma_n^{2(1-\epsilon')}) \text{ a.s. if } 4\alpha< \gamma, \epsilon' \text{ is small.}\]

\subsubsection*{Estimate (\ref{4}):}

First note that:
\[(\psi_i \circ T^i \cdot H_{i+1}\circ T^{i+1})_{i \ge 1} \text{ is reverse martingale difference w.r.t. }  (T^{-i}\mathcal{B})_{i \ge 1}.\]

Using (\ref{bound2}), (\ref{bound3}) when $\alpha< \frac{1}{8}$ and H\"{o}lder inequality, we have
\[ \int |\frac{\sum_{i \leq n }(\psi_i \circ T^i \cdot H_{i+1}\circ T^{i+1})}{ \sigma_n^{2(1-\epsilon')}}|^2 dm= \frac{\sum_{i \leq n }\int \psi^2_i \circ T^i \cdot H^2_{i+1}\circ T^{i+1}dm}{ \sigma_n^{4(1-\epsilon')}}  \]
\[\le \sum_{i\le n} \frac{||\psi^2_i \circ T^i||_{L^2} \cdot ||H^2_{i+1}\circ T^{i+1}||_{L^2}}{\sigma_n^{4(1-\epsilon')}}\precsim \frac{n}{n^{2\gamma(1-\epsilon')}}.\]

Choose $\omega >0$ s.t. $\omega \cdot (2 \gamma (1-\epsilon')-1)>1$, by Borel-Cantelli Lemma:
\[\lim_{N \to \infty} \frac{\sum_{i \leq \lfloor N^\omega \rfloor }(\psi_i \circ T^i \cdot H_{i+1}\circ T^{i+1})}{ \sigma_{\lfloor N^\omega \rfloor}^{2(1-\epsilon')}}=0 \text{ a.s.}\]

For any $n \in \mathbb{N}$, there is $N \in \mathbb{N}$ s.t. $\lfloor N^\omega \rfloor \le n \le \lfloor {(N+1)}^\omega \rfloor$. Then by Martingale inequality, (\ref{bound2}) and (\ref{bound3}) when $\alpha< \frac{1}{8}$ again, we have
\[\mathbb{E} \frac{\max_{\lfloor N^\omega \rfloor \le j\le \lfloor {(N+1)}^\omega \rfloor} |\sum_{j \le i \leq \lfloor (N+1)^\omega \rfloor}\psi_i \circ T^i \cdot H_{i+1}\circ T^{i+1}|^2}{\sigma_{\lfloor N^\omega \rfloor}^{4(1-\epsilon')}}\]
\[\precsim \frac{\mathbb{E}(|\sum_{\lfloor N^\omega \rfloor \le j \le i \leq \lfloor (N+1)^\omega \rfloor}\psi_i \circ T^i \cdot H_{i+1}\circ T^{i+1}|^2)}{N^{2 \gamma \omega(1-\epsilon')}} \]
\[\precsim \frac{\lfloor (N+1)^\omega \rfloor-\lfloor N^\omega \rfloor}{N^{2 \gamma \omega(1-\epsilon')}}\precsim\frac{N^{\omega-1}}{N^{2 \gamma \omega(1-\epsilon')}}.\]

Therefore if $2\gamma >1$, then we can choose small $\epsilon'$ s.t. $2\gamma \omega (1-\epsilon')+1-\omega>1$. By the Borel-Cantelli Lemma, we have
\[\max_{\lfloor N^\omega \rfloor \le j\le \lfloor (N+1)^\omega \rfloor} |\sum_{j \le i \leq \lfloor (N+1)^\omega \rfloor}\psi_i \circ T^i \cdot H_{i+1}\circ T^{i+1}|=o(\sigma_{\lfloor N^\omega \rfloor}^{2(1-\epsilon')}).\]

Besides, using (\ref{bound2}) and (\ref{bound3}) when $\alpha< \frac{1}{8}$ again, we have
\[\frac{\sigma_{\lfloor (N+1)^\omega \rfloor}^{2}}{\sigma_{ \lfloor N^\omega \rfloor }^{2}}=\frac{\sigma_{\lfloor N^\omega \rfloor }^{2}+\sigma_{\lfloor (N+1)^\omega \rfloor}^{2}-\sigma_{\lfloor N^\omega \rfloor }^{2}}{\sigma_{ \lfloor N^\omega \rfloor }^{2}} \precsim 1+ \frac{\lfloor (N+1)^\omega \rfloor- \lfloor N^\omega \rfloor }{\sigma_{ \lfloor N^\omega \rfloor}^{2}}\precsim 1+ \frac{N^{\omega-1}}{N^{\omega \cdot \gamma}}.\]

Hence when $ \gamma \geq 1- \frac{1}{\omega}$,  $\frac{\sigma_{\lfloor (N+1)^\omega \rfloor}^{2}}{\sigma_{ \lfloor N^\omega \rfloor }^{2}}=O(1)$. Therefore,
\[|\sum_{i \leq n}\psi_i \circ T^i \cdot H_{i+1}\circ T^{i+1}|\le |\sum_{i \leq \lfloor (N+1)^\omega \rfloor}\psi_i \circ T^i \cdot H_{i+1}\circ T^{i+1}|\]
\[+\max_{\lfloor N^\omega \rfloor\le j\le \lfloor (N+1)^\omega \rfloor} |\sum_{j \le i \leq \lfloor (N+1)^\omega \rfloor}\psi_i \circ T^i \cdot H_{i+1}\circ T^{i+1}|\precsim o(\sigma_{\lfloor (N+1)^\omega \rfloor}^{2(1-\epsilon')})+ o(\sigma_{\lfloor N^\omega \rfloor}^{2(1-\epsilon')}) \]
\[\precsim o(\sigma_{\lfloor N^\omega \rfloor}^{2(1-\epsilon')})\le o(\sigma_n^{2(1-\epsilon')}).\]

Therefore, when $\gamma> \frac{2}{3}, \alpha< \frac{1}{8}$, we can find small $\epsilon'$ such that $\gamma \geq 1- \frac{1}{\omega}, 2\gamma(1-\epsilon')-1> \frac{1}{\omega}$ hold and $(\ref{4})= o(\sigma_n^{2(1-\epsilon')}) $.

\subsubsection*{Estimate (\ref{5}):}

Let $ U_n:=(\ref{5})$. From the proof of Lemma
3.4 in \cite{NTV}: for $m< n $,
\begin{equation}\label{6}
\int|U_n-U_m|^2 dm  \precsim ||\phi||^2_{W^{1,\infty}}\cdot(n-m+\sum_{m \le j \le n}j^{\frac{2\alpha}{1-2\alpha}}).
\end{equation}

Although $ \phi$ is not $ C^1 $ in this paper, (\ref{6}) still holds for $ \phi \in \Lip[0,1]$ by the same argument in Corollary \ref{decay1}. Then, when $\alpha < \frac{1}{4}$:
\[  \int|\frac{U_n}{\sigma_n^{2(1-\epsilon')}} |^2 dm  \precsim \frac{1}{\sigma_n^{4(1-\epsilon')}} \cdot (n+ \sum_{ j \le n}j^{\frac{2\alpha}{1-2\alpha}} ) \precsim\frac{1}{n^{2\gamma(1-\epsilon')-\frac{1}{1-2\alpha}}}.\]
Choose $ \omega>0$ and small $\epsilon'$ s.t. $ \omega \cdot (2\gamma(1-\epsilon')-\frac{1}{1-2\alpha})>1$. By Borel-Cantelli Lemma:
\[U_{\lfloor N^\omega \rfloor}=o(\sigma_{\lfloor N^\omega \rfloor}^{2(1-\epsilon')}) \text{ a.s.}\]

For any $ n$, there is $N$ s.t. $\lfloor N^{\omega} \rfloor\le n \le \lfloor (N+1)^{\omega} \rfloor$, then:
\[|U_n| \le |U_{\lfloor N^{\omega} \rfloor}|+\sup_{\lfloor N^{\omega} \rfloor\le n \le \lfloor(N+1)^{\omega}\rfloor}|U_n-U_{\lfloor N^{\omega} \rfloor}|\]
\[= o(\sigma_{\lfloor N^\omega \rfloor}^{2(1-\epsilon')})+ \sup_{\lfloor N^{\omega} \rfloor\le n \le \lfloor (N+1)^{\omega}\rfloor}|U_n-U_{\lfloor N^{\omega}\rfloor}|.\]
Estimate $ \sup_{\lfloor N^{\omega}\rfloor\le n \le \lfloor(N+1)^{\omega}\rfloor}|U_n-U_{\lfloor N^{\omega}\rfloor}|
$:
\[ \int{| \frac{\sup_{\lfloor N^{\omega}\rfloor\le n \le \lfloor (N+1)^{\omega}\rfloor}|U_n-U_{\lfloor N^{\omega}\rfloor}|}{\sigma_{\lfloor N^\omega \rfloor}^{2(1-\epsilon')}} |^2 dm} \]
\[\le \frac{\sum_{\lfloor N^{\omega}\rfloor\le n \le \lfloor (N+1)^{\omega}\rfloor}\int{ |U_n-U_{\lfloor  N^{\omega}\rfloor}|^2 dm}}{\sigma_{\lfloor N^\omega \rfloor}^{4(1-\epsilon')}}  \]
\[\precsim\frac{1}{\sigma_{\lfloor N^\omega \rfloor}^{4(1-\epsilon')}} \sum_{\lfloor N^\omega \rfloor\le n \le \lfloor (N+1)^\omega \rfloor} (n-\lfloor N^{\omega}\rfloor +\sum_{\lfloor N^\omega \rfloor\leq j \leq n} j^{\frac{2\alpha}{1-2\alpha}}) \precsim \frac{N^{2(\omega-1)+\omega\frac{2\alpha}{1-2\alpha}}}{N^{2\gamma\omega(1-\epsilon')}}.\]

By Borel-Cantelli lemma, when $ \gamma > 1-\frac{1}{2\omega} + \frac{\alpha}{1-2\alpha}$, small $ \epsilon'>0$, then
\[2\gamma\omega(1-\epsilon')-2(\omega-1)-\omega\frac{2\alpha}{1-2\alpha}>1,\]
\[ \sup_{\lfloor N^{\omega} \rfloor \le n \le \lfloor (N+1)^{\omega}\rfloor}|U_n-U_{\lfloor N^{\omega} \rfloor }|= o(\sigma_{\lfloor N^\omega \rfloor}^{2(1-\epsilon')})\le o(\sigma_{n}^{2(1-\epsilon')}).\]

Therefore: \[(\ref{5})=U_n=o(\sigma_{n}^{2(1-\epsilon')}).\]

If $\gamma> \frac{1}{2}+ \frac{1+2\alpha}{4(1-2\alpha)}$, we can find $ \omega$ s.t. $ \omega \cdot (2\gamma-\frac{1}{1-2\alpha})>1$, $ \gamma > 1-\frac{1}{2\omega} + \frac{\alpha}{1-2\alpha} $ are all satisfied. Then by Lemma \ref{crucial estimate}, there is small $\epsilon_0>0$:
\[S_n=o(\delta_n^{2+\epsilon_0}) \text{ a.s.}  \text{ if } \gamma> \frac{1}{2}+ \frac{1+2\alpha}{4(1-2\alpha)}, \alpha< \frac{1}{8}.  \]

Therefore:
\[\tau_n-\delta_n^2=o(\delta_n^{2+\epsilon_0}) \text{ a.s. if } \gamma> \frac{1}{2}+ \frac{1+2\alpha}{4(1-2\alpha)}, \alpha< \frac{1}{8}.\]

By Lemma \ref{criteria1},
\[(\phi_n\circ T^n)_{n\ge1} \text{ satisfies ASIP, if } \gamma> \frac{1}{2}+ \frac{1+2\alpha}{4(1-2\alpha)}, \alpha< \frac{1}{8}.\]
\qed

\section{Applications of Theorem \ref{thm}}

\begin{theorem}[Nearby maps]

  Consider the non-stationary dynamical system\break
  $ ([0,1], \mathcal{B}, (T_k)_{k \ge 1}, dm)$ where $dm $ is Lebesgue
  measure, $T_0=T_{\beta_0}, T_k=T_{\beta_k}$ are Pomeau-Manneville maps,
  $ 0<\beta_0,\beta_k< \frac{1}{8} $ and an observation
  $\phi \in \Lip[0,1]$; assume $\phi $ is not co-boundary w.r.t. $ T_0$ in
  $ L^2([0,1],dm) $, i.e.
\[ \phi \neq c+ \psi \circ T_0-\psi \text{ for any measurable $\psi$ and constant c.}\]

Then there is $ \epsilon>0 $ such that
\[\text{for any } \beta_k \in (\beta_0-\epsilon, \beta_0+ \epsilon), \text{ } (\phi_i \circ T^i)_{i\ge 1}\text{ satisfies ASIP.} \]
\end{theorem}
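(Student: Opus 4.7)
The plan is to verify the hypotheses of Theorem~\ref{thm} for every admissible sequence $(\beta_k)$. Since $\beta_0<\tfrac{1}{8}$, fix $\alpha\in(\beta_0,\tfrac{1}{8})$ and choose $\epsilon>0$ small enough that $\beta_0+\epsilon<\alpha$; then any $\beta_k\in(\beta_0-\epsilon,\beta_0+\epsilon)$ satisfies $\beta_k<\alpha<\tfrac{1}{8}$, so the slope constraint of Theorem~\ref{thm} is automatic. Note also that the growth threshold $\gamma>\tfrac12+\tfrac{1+2\alpha}{4(1-2\alpha)}$ tends to $\tfrac{11}{12}<1$ as $\alpha\to\tfrac18^-$, so it suffices to establish the linear lower bound $\Sigma_n^2\succsim n$.

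For the unperturbed base $\beta_k\equiv\beta_0$, the Pomeau--Manneville map $T_{\beta_0}$ admits a unique absolutely continuous invariant probability measure $\mu_0=h_0\,dm$ with $h_0\in C_a$, and Theorem~\ref{citethm} (applied with $\phi_k\equiv1$) yields $\|P_{\beta_0}^n\mathbf{1}-h_0\|_{L^1}\to 0$ together with polynomial decay of correlations. The hypothesis that $\phi$ is not an $L^2(dm)$-coboundary for $T_0$ transfers to the analogous non-coboundary statement in $L^2(\mu_0)$ (since $h_0$ is strictly positive and bounded on any $[\delta,1]$, while $\phi\in\Lip[0,1]$ controls the fixed point separately). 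Hence the Green--Kubo series
\[\sigma_0^2:=\int\bar\phi^2\,d\mu_0+2\sum_{k\ge1}\int\bar\phi\cdot(\bar\phi\circ T_{\beta_0}^k)\,d\mu_0,\qquad \bar\phi:=\phi-\textstyle\int\phi\,d\mu_0,\]
converges absolutely (by polynomial decay of correlations) and is strictly positive, and transferring expectations from $\mu_0$ to Lebesgue via the $L^1$-convergence $P_{\beta_0}^n\mathbf{1}\to h_0$ gives $\Sigma_n^2=n\sigma_0^2+O(1)$ in the stationary case.

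The principal obstacle is upgrading this to the non-stationary $\Sigma_n^2$ when the slopes $(\beta_k)$ only stay in a small neighborhood of $\beta_0$: the absence of a spectral gap in any Banach space continuously injected in $L^\infty$ prevents a direct appeal to analytic perturbation theory. The plan is to expand
\[\Sigma_n^2=\sum_{i\le n}\int\phi_i^2\circ T^i\,dm+2\sum_{i<j\le n}\int\phi_i\cdot(\phi_j\circ T_{i+1}^j)\cdot P^i\mathbf{1}\,dm,\]
control the off-diagonal contributions via Corollary~\ref{decay1} uniformly in the slope sequence (the constants depend only on the common upper bound $\alpha$), and replace each iterate $P^i\mathbf{1}$ by $h_0$ up to an $L^1$-error that is small provided $\epsilon$ is small. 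The cone-invariance of $C_a$ under every $P_{\beta_k}$ and the continuity of $\beta\mapsto P_\beta f$ in the relevant norm make this replacement quantitative: shrinking $\epsilon$, one obtains $\Sigma_n^2\ge(1-o(1))\,n\sigma_0^2$ for all sufficiently large $n$. This delivers $\Sigma_n^2\succsim n$, verifies all hypotheses of Theorem~\ref{thm}, and yields the desired ASIP.
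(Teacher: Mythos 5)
Your high-level reduction coincides with the paper's: since $\frac{1}{2}+\frac{1+2\alpha}{4(1-2\alpha)}<1$ for $\alpha<\frac18$, it suffices to prove the linear growth $\Sigma_n^2\succsim n$ and then invoke Theorem \ref{thm}. The paper obtains this linear growth in one line by citing \cite{NTV} (Theorem 4.1): for all maps with exponents in a small neighborhood of $\beta_0$ there is $\eta>0$ with $\|\psi_n\circ T^n\|_{L^2}>\eta$ for all large $n$, which together with Lemma \ref{close variance} gives $\Sigma_n^2\succsim n$. You instead attempt to re-derive this fact from scratch, and your sketch breaks down precisely at the points where the real work lies.

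Concretely, three gaps. First, without a spectral gap, ``continuity of $\beta\mapsto P_\beta f$'' does not allow you to replace $P^i\mathbf{1}$ by $h_0$ with an error small uniformly in $i$: the naive telescoping $P^i\mathbf{1}-P_{\beta_0}^i\mathbf{1}=\sum_{k\le i}P_{k+1}^i\,(P_{\beta_k}-P_{\beta_0})\,P_{\beta_0}^{k-1}\mathbf{1}$ produces an error of order $i\epsilon$, which blows up with $i$. The mechanism that rescues this is that each summand has zero mean and is a difference of cone elements, so the polynomial loss of memory of \cite{AHNTV} kills the terms with $i-k$ large; combining the two bounds yields an error $o_\epsilon(1)$ uniform in $i$. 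You never invoke this, and mere continuity is insufficient. Second, and more seriously, your off-diagonal terms contain the sequential compositions $T_{i+1}^j$, not iterates of $T_{\beta_0}$. Corollary \ref{decay1} gives only \emph{upper} bounds on these correlations, which cannot produce a \emph{lower} bound on $\Sigma_n^2$: the Green--Kubo series has cancellations, and $\sigma_0^2$ may be far smaller than $\int\bar\phi^2\,d\mu_0$, so you must show each sequential correlation $\int\phi_i\,(\phi_j\circ T_{i+1}^j)\,P^i\mathbf{1}\,dm$ is \emph{close to} its stationary counterpart $\int\bar\phi\,(\bar\phi\circ T_{\beta_0}^{j-i})\,d\mu_0$ --- again a telescoping-plus-loss-of-memory estimate --- and your sketch does not address this at all. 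Third, the positivity $\sigma_0^2>0$ under the $L^2(dm)$ non-coboundary hypothesis is asserted rather than proved; for an intermittent map with only polynomial decay of correlations, the equivalence ``zero Green--Kubo variance iff coboundary'' is itself a theorem requiring proof (e.g.\ via a martingale--coboundary decomposition for $T_{\beta_0}$). These three missing pieces are exactly the content of \cite{NTV} Theorem 4.1; either cite it, as the paper does, or supply them in full.
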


\begin{proof}
  From \cite{NTV} Theorem 4.1, there is $ \eta>0$ s.t.
  $ ||\psi_n \circ T^n||_{L^2}> \eta$, for all $n \gg 1$, therefore
  $ \Sigma_n^2 \succsim n$ when $\beta_0, \beta_k < \frac{1}{4}$. By Theorem \ref{thm}, ASIP is satisfied.
\end{proof}

\begin{theorem}[Random compositions]
  Consider finitely many maps
  $([0,1], \mathcal{B},\break (T_k)_{0 \le k \le d}, dm)$, where $dm $ is
  Lebesgue measure, $T_k=T_{\beta_k}$, $0\le k \le d$, are
  Pomeau-Manneville maps, $ 0<\beta_k< \frac{1}{8} $, and an observation
  $\phi \in \Lip[0,1]$; assume $\phi $ is not co-boundary w.r.t. $ T_0$ in
  $ L^2([0,1],dm) $.

  Define a symbolic dynamical system
  $(\{ 0,1, \cdots, d\}^{\mathbb{N}}, \sigma, P^{\otimes \mathbb{N}})$
  where $ \sigma $ is the left shift and $ P $ is a probability on
  $\{ 0,1, \cdots, d\}$. Define random compositions by
  $ T^k_{\omega} := T_{(\sigma^{k-1} \omega )_0}\circ T_{(\sigma^{k-2}
    \omega )_0}\circ \cdots \circ T_{(\omega )_0}$ and random variance
  $ \Sigma_n^2(\omega)= \int (\sum_{i\le n}\phi \circ T^i_{\omega}-m(\phi
  \circ T^i_{\omega}) )^2 dm$ for
  $\omega\in \{ 0,1, \cdots, d\}^{\mathbb{N}}$. Then
  $ (\phi \circ T^n_{\omega}-m(\phi \circ T^n_{\omega}) )_{n\ge 1}\text{
    satisfies ASIP}$ for $ P^{\otimes \mathbb{N}}\text{-a.e. } \omega$.
\end{theorem}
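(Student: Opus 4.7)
The plan is to apply Theorem \ref{thm} pathwise. Fix $\omega \in \{0,1,\ldots,d\}^{\mathbb{N}}$ and regard $(T_k^\omega)_{k\ge 1}$ as a non-stationary sequence of Pomeau-Manneville maps whose exponents $\beta_{(\sigma^{k-1}\omega)_0}$ are uniformly bounded by $\alpha := 1/8$. The hypotheses of Corollary \ref{decay1}, Lemma \ref{lower bound}, the $L^p$-estimates (\ref{bound1})--(\ref{bound3}) and Lemma \ref{close variance} depend only on this uniform upper bound on the slopes, so all the structural results of Section 2 hold along the path $(T_k^\omega)$ with constants independent of $\omega$. By Theorem \ref{thm} it therefore suffices to verify $\Sigma_n^2(\omega) \succsim n^\gamma$ for some $\gamma > \tfrac{1}{2}+\tfrac{1+2\alpha}{4(1-2\alpha)}$; since $\alpha<1/8$ the choice $\gamma=1$ already suffices, so the task reduces to proving $\Sigma_n^2(\omega)\succsim n$ for $P^{\otimes\mathbb{N}}$-a.e.\ $\omega$ (we tacitly assume $P(\{0\})>0$, since otherwise the non-coboundary hypothesis plays no role).

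By Lemma \ref{close variance} applied pathwise, $\Sigma_n^2(\omega) = \sum_{k\le n}\|\psi_k^\omega\circ T_\omega^k\|_{L^2}^2 + O(1)$, with the error term uniform in $\omega$ thanks to (\ref{bound2}). Thus one seeks a constant $\eta>0$ and, for $P^{\otimes\mathbb{N}}$-a.e.\ $\omega$, a set of indices $k$ of positive lower density along which $\|\psi_k^\omega \circ T_\omega^k\|_{L^2}\ge \eta$. Since the Bernoulli shift $\sigma$ is ergodic under $P^{\otimes\mathbb{N}}$, Birkhoff's theorem yields that $\{k\ge 1 : (\sigma^{k-1}\omega)_0 = 0\}$ has density $P(\{0\})>0$ for almost every $\omega$, and at each such index the last map applied in $T_\omega^k$ is exactly $T_0$.

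At these good indices I would invoke the argument of \cite{NTV} Theorem 4.1 used in the proof of the nearby-maps result: the non-coboundary hypothesis for $\phi$ with respect to $T_0$, combined with the uniform decay-of-correlations bound of Corollary \ref{decay1} for $T_0$ and the fact that the backward iterates $P_\omega^{k-1}\mathbf{1}$ all lie in the cone $C_a$ with $L^1$-norm one, forces $\|\psi_k^\omega\circ T_\omega^k\|_{L^2}\ge \eta$ for $k$ large, with $\eta$ independent of $\omega$. Summing over the positive-density subfamily of $T_0$-indices gives $\Sigma_n^2(\omega)\succsim n$ almost surely, and Theorem \ref{thm} then delivers the quenched ASIP.

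The main obstacle is producing the lower bound $\eta$ uniformly in $\omega$ at the $T_0$-indices. In the nearby-maps theorem every $T_i$ was already a small perturbation of $T_0$, so the spectral/perturbation argument of \cite{NTV} applied directly; here the intermediate maps $T_{\beta_i}$ can be genuinely different PM maps, and one must check that the argument of \cite{NTV} remains robust under composition with arbitrary PM maps of exponent $\le 1/8$. I would handle this by a compactness/contradiction argument: if no such $\eta$ existed, then along a subsequence of good indices the closedness of $C_a$ together with the uniform decay estimate (\ref{decay}) would produce a limit density $h_\ast\in C_a$ with respect to which $\phi$ becomes an $L^2(dm)$ coboundary for $T_0$, contradicting the standing non-coboundary hypothesis.
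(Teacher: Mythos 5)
Your reduction is the one the paper uses: because the family is finite with all $\beta_k<\frac18$, one can choose $\alpha$ with $\max_k\beta_k<\alpha<\frac18$, and then the threshold $\frac12+\frac{1+2\alpha}{4(1-2\alpha)}$ in Theorem \ref{thm} is strictly less than $1$, so the quenched ASIP follows once $\Sigma_n^2(\omega)\succsim n$ for $P^{\otimes\mathbb{N}}$-a.e.\ $\omega$. The paper obtains this variance bound by simply citing Lemma 5.1 of \cite{NTV} (together with the remark that the linear-growth argument, written there for $C^1$ observables, extends to $\Lip[0,1]$ by the convolution trick of Corollary \ref{decay1}). You instead attempt to reprove it, and that is where your argument has a genuine gap.

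The gap is the claim that $\|\psi_k^\omega\circ T_\omega^k\|_{L^2}\ge\eta$ uniformly at every index $k$ whose \emph{last} applied map is $T_0$. By (\ref{martingale}) and (\ref{error}), the pathwise martingale difference satisfies $\psi_k^\omega\circ T_\omega^k=\phi_k\circ T_\omega^k+H_k\circ T_\omega^k-H_{k+1}\circ T_\omega^{k+1}$, where $H_k$ and $H_{k+1}$ depend on the \emph{entire} word $(\omega_0,\dots,\omega_k)$, and for intermittent maps this dependence on the past decays only polynomially (Theorem \ref{citethm}), not geometrically. The non-coboundary hypothesis for $T_0$ yields a quantitative lower bound only for the stationary $T_0$-objects; to transfer it to index $k$ one needs $H_k$ and $H_{k+1}$ to be close to their stationary $T_0$ counterparts, which requires the maps at times $k-L,\dots,k$ to \emph{all} equal $T_0$ for some large $L$ (so that the dominant, slowly decaying terms of (\ref{error}) are computed with $P_0$ alone and $P_\omega^i\mathbf{1}$ has relaxed toward the $T_0$-invariant density). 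An isolated terminal $T_0$ provides none of this, so your single-index lower bound is unjustified. Your compactness fallback does not repair it: along a subsequence of ``good'' indices the preceding words are arbitrary, so any limit object you extract satisfies at best a relation involving varying compositions, not a coboundary equation $\phi=c+\psi\circ T_0-\psi$ for the single map $T_0$; no contradiction with the hypothesis results. The fix is precisely the block mechanism behind \cite{NTV} Lemma 5.1, which the paper invokes: by the ergodic theorem for the Bernoulli shift, blocks of $L$ consecutive $0$'s occur with positive density (of order $P(\{0\})^L$) for a.e.\ $\omega$, and each such block contributes a variance increment bounded below, giving $\Sigma_n^2(\omega)\succsim n$. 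With that replacement (and your correct observation that $P(\{0\})>0$ must be tacitly assumed), your outline coincides with the paper's proof.
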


\begin{proof}
  From \cite{NTV} Lemma 5.1, there is a $C>0$ and almost surely an
  $N_{\omega}\in \mathbb{N}$ s.t. $ \Sigma_n^2(\omega) \ge C n$, for all
  $n \ge N_{\omega}$ when $ \beta_k < \frac{1}{4}$. By Theorem \ref{thm}, ASIP is satisfied for each such
  $\omega$.
\end{proof}

\begin{remark}
Note that although \cite{NTV} proved that $ \Sigma_n^2$ (resp.
$\Sigma_n^2(\omega)$) has linear growth for $ \phi \in C^1[0,1]$, the
linear growth still holds for $ \phi \in \Lip[0,1]$ by the same argument in Corollary \ref{decay1}.
\end{remark}




\section*{Acknowledgments}
The author warmly thanks for the help from his advisor Prof. Andrew T\"or\"ok, and thanks University of Houston for good place to study dynamical system. The author also thanks referees for pointing out the typos and the error of the estimate of (\ref{4}).


\medskip
Received January 2019; revised   May 2019.
\medskip

\end{document}